\theoremstyle{plain}
\newtheorem{Thm}{Theorem}[section]
\newtheorem{Lem}[Thm]{Lemma}
\newtheorem{Cor}[Thm]{Corollary}
\newtheorem{Pro}[Thm]{Proposition}
\newtheorem{Prp}[Thm]{Properties}
\newtheorem{Sub}[Thm]{Sublemma}
\theoremstyle{definition}
\newtheorem{Def}[Thm]{Definition}
\newtheorem{Exm}[Thm]{Example}
\newtheorem{Exs}[Thm]{Examples}
\theoremstyle{remark}
\newtheorem{Rem}[Thm]{Remark}
\newtheorem{Rms}[Thm]{Remarks}
\newtheorem*{Com}{Commentary}
\newcommand{\myEmail}{piotr.niemiec@uj.edu.pl}
\newcommand{\myAddress}{\noindent{}Piotr Niemiec\\{}Jagiellonian University\\{}Institute of Mathematics\\{}%%
   ul. \L{}ojasiewicza 6\\{}30-348 Krak\'{o}w\\{}Poland}
\newcommand{\myData}{\author[P. Niemiec]{Piotr Niemiec}\address{\myAddress}\email{\myEmail}}
\newcommand{\NNN}{\mathbb{N}}
\newcommand{\RRR}{\mathbb{R}}
\newcommand{\CCc}{\CMcal{C}}
\newcommand{\FFf}{\CMcal{F}}
\newcommand{\WWw}{\CMcal{W}}
\newcommand{\AaA}{\EuScript{A}}
\newcommand{\FfF}{\EuScript{F}}
\newcommand{\Bb}{\mathfrak{B}}
\newcommand{\Mm}{\mathfrak{M}}\newcommand{\Nn}{\mathfrak{N}}
\newcommand{\mM}{\mathfrak{m}}
\newcommand{\SECT}[1]{\section{#1}\renewcommand{\theequation}{\thesection-\arabic{equation}}\setcounter{equation}{0}}
\newcounter{help}
\newcommand{\ITE}[3]{\ifthenelse{#1}{#2}{#3}}\newcommand{\ITEE}[3]{\ITE{\equal{#1}{#2}}{#3}{}}
\newcommand{\card}{\operatorname{card}}
\newcommand{\leqsl}{\leqslant}\newcommand{\geqsl}{\geqslant}
\newcommand{\varempty}{\varnothing}\newcommand{\dd}{\colon}
\newcommand{\tfcae}{the following conditions are equivalent:}
\newcommand{\THM}[1]{Theorem~\textup{\ref{thm:#1}}}
\newcommand{\COR}[1]{Corollary~\textup{\ref{cor:#1}}}
\newcommand{\LEM}[1]{Lemma~\textup{\ref{lem:#1}}}\newcommand{\PRO}[1]{Proposition~\textup{\ref{pro:#1}}}
\newenvironment{thm}[1]{\begin{Thm}\label{thm:#1}}{\end{Thm}}\newenvironment{lem}[1]{\begin{Lem}\label{lem:#1}}{\end{Lem}}
\newenvironment{cor}[1]{\begin{Cor}\label{cor:#1}}{\end{Cor}}\newenvironment{pro}[1]{\begin{Pro}\label{pro:#1}}{\end{Pro}}
\newcommand{\bibITEM}[2]{\ITE{\equal{#2}{}}{\bibitem{#1} }{\bibitem[#2]{#1} }}
\newcommand{\BIB}[8]{%% article: hidden label, author, title, journal, number, year, pages, shown label
   \bibITEM{#1}{#8} #2, \textit{#3}, #4{} \textbf{#5} (#6), #7.}
\newcommand{\myBIB}[6][P. Niemiec]{#1, \textit{#2}, #3{}\ITE{\equal{#4}{}}{}{ \textbf{#4}} (#5), #6.}
\newcommand{\BIb}[6]{%% book: hidden label, author, title, publisher + city, year, shown label
   \bibITEM{#1}{#6} #2, \textit{#3}, #4, #5.}
\newcommand{\BiB}[9]{%% article in book: hid.label, author, title, in:, book title, publisher etc, year, pages, shown label
   \bibITEM{#1}{#9} #2, \textit{#3}, #4{} \textit{#5}, #6, #7, #8.}
\newcommand{\myBAPP}[3][P. Niemiec]{%% my paper to appear: [authors], title, comment
   #1, \textit{#2}, #3}
\newcommand{\jRN}[2][]{%%
   \ITEE{#2}{ActaM}{\ITE{\equal{#1}{+}}%%
      {Acta Mathematica}{Acta Math.}}%%
   \ITEE{#2}{ActaMSinES}{\ITE{\equal{#1}{+}}%%
      {Acta Mathematica Sinica (English Series)}{Acta Math. Sin. (Engl. Ser.)}}%%
   \ITEE{#2}{AdvM}{\ITE{\equal{#1}{+}}%%
      {Advances in Mathematics}{Adv. in Math.}}%%
   \ITEE{#2}{ACS}{\ITE{\equal{#1}{+}}%%
      {Applied Categorical Structures}{Appl. Categor. Struct.}}%%
   \ITEE{#2}{ActaSM}{\ITE{\equal{#1}{+}}%%
      {Acta Scientiarum Mathematicarum}{Acta Sci. Math.}}%%
   \ITEE{#2}{AmJM}{\ITE{\equal{#1}{+}}%%
      {American Journal of Mathematics}{Amer. J. Math.}}%%
   \ITEE{#2}{AmMMon}{\ITE{\equal{#1}{+}}%%
      {American Mathematical Monthly}{Amer. Math. Mon.}}%%
   \ITEE{#2}{AnnSciEcNormSupT}{\ITE{\equal{#1}{+}}%%
      {Annales Scientifiques de l'\'{E}cole Normale Sup\'{e}rieure (3)}{Ann. Sci. \'{E}c. Norm. Sup\'{e}r. (3)}}%%
   \ITEE{#2}{AnnM}{\ITE{\equal{#1}{+}}%%
      {Annals of Mathematics}{Ann. Math.}}%%
   \ITEE{#2}{AnnProb}{\ITE{\equal{#1}{+}}%%
      {The Annals of Probability}{Ann. Probab.}}%%
   \ITEE{#2}{AnnPALog}{\ITE{\equal{#1}{+}}%%
      {Annals of Pure and Applied Logic}{Ann. Pure Appl. Logic}}%%
   \ITEE{#2}{APM}{\ITE{\equal{#1}{+}}%%
      {Annales Polonici Mathematici}{Ann. Polon. Math.}}%%
   \ITEE{#2}{ArchM}{\ITE{\equal{#1}{+}}%%
      {Archiv der Mathematik}{Arch. Math.}}%%
   \ITEE{#2}{AttiAccLincRendNat}{\ITE{\equal{#1}{+}}%%
      {Atti della Accademia Nazionale dei Lincei. Rendiconti. Classe di Scienze Fisiche, Matematiche e Naturali}%%
      {Atti Accad. Naz. Lincei Rend. Cl. Sci. Fis. Mat. Nat.}}%%
   \ITEE{#2}{BAMS}{\ITE{\equal{#1}{+}}%%
      {Bulletin of the American Mathematical Society}{Bull. Amer. Math. Soc.}}%%
   \ITEE{#2}{BAustrMS}{\ITE{\equal{#1}{+}}%%
      {Bulletin of the Australian Mathematical Society}{Bull. Austral. Math. Soc.}}%%
   \ITEE{#2}{BLondMS}{\ITE{\equal{#1}{+}}%%
      {Bulletin of the London Mathematical Sociecy}{Bull. Lond. Math. Soc.}}%%
   \ITEE{#2}{BAPolSSSM}{\ITE{\equal{#1}{+}}%%
      {Bulletin de l'Acad\'{e}mie Polonaise des Sciences. S\'{e}rie des Sciences Math\'{e}matiques}%%
      {Bull. Acad. Pol. Sci. S\'{e}r. Sci. Math.}}%%
   \ITEE{#2}{BullSM}{\ITE{\equal{#1}{+}}%%
      {Bulletin des Sciences Math\'{e}matiques}{Bull. Sci. Math.}}%%
   \ITEE{#2}{BullPol}{\ITE{\equal{#1}{+}}%%
      {Bulletin of the Polish Academy of Sciences: Mathematics}{Bull. Pol. Acad. Sci. Math.}}%%
   \ITEE{#2}{CanadJM}{\ITE{\equal{#1}{+}}%%
      {Canadian Journal Mathematics}{Canad. J. Math.}}%%
   \ITEE{#2}{CollectM}{\ITE{\equal{#1}{+}}%%
      {Collectanea Mathematica}{Collect. Math.}}%%
   \ITEE{#2}{CMUC}{\ITE{\equal{#1}{+}}%%
      {Commentationes Mathematicae Universitatis Carolinae}{Comment. Math. Univ. Carolin.}}%%
   \ITEE{#2}{CRParis}{\ITE{\equal{#1}{+}}%%
      {C. R. Paris}{C. R. Paris}}%%
   \ITEE{#2}{CRASParis}{\ITE{\equal{#1}{+}}%%
      {Comptes Rendus de l'Acad\'{e}mie des Sciences. Paris}{C. R. Acad. Sci. Paris}}%%
   \ITEE{#2}{CEurJM}{\ITE{\equal{#1}{+}}%%
      {Central European Journal of Mathematics}{Cent. Eur. J. Math.}}%%
   \ITEE{#2}{CMHelv}{\ITE{\equal{#1}{+}}%%
      {Commentarii Mathematici Helvetici}{Comment. Math. Helv.}}%%
   \ITEE{#2}{CollM}{\ITE{\equal{#1}{+}}%%
      {Colloquium Mathematicum}{Coll. Math.}}%%
   \ITEE{#2}{ComposM}{\ITE{\equal{#1}{+}}%%
      {Compositio Mathematica}{Compos. Math.}}%%
   \ITEE{#2}{CzMJ}{\ITE{\equal{#1}{+}}%%
      {Czechoslovak Mathematical Journal}{Czech. Math. J.}}%%
   \ITEE{#2}{DissM}{\ITE{\equal{#1}{+}}%%
      {Dissertationes Mathematicae}{Dissert. Math.}}%%
   \ITEE{#2}{DANSSSR}{\ITE{\equal{#1}{+}}%%
      {Doklady Akademii Nauk SSSR}{Dokl. Akad. Nauk SSSR}}%%
   \ITEE{#2}{DukeMJ}{\ITE{\equal{#1}{+}}%%
      {Duke Mathematical Journal}{Duke Math. J.}}%%
   \ITEE{#2}{ELA}{\ITE{\equal{#1}{+}}%%
      {The Electronic Journal of Linear Algebra}{Electron. J. Linear Algebra}}%%
   \ITEE{#2}{ExtrM}{\ITE{\equal{#1}{+}}%%
      {Extracta Mathematicae}{Extracta Math.}}%%
   \ITEE{#2}{FM}{\ITE{\equal{#1}{+}}%%
      {Fundamenta Mathematicae}{Fund. Math.}}%%
   \ITEE{#2}{FAA}{\ITE{\equal{#1}{+}}%%
      {Functional Analysis and its Applications}{Funct. Anal. Appl.}}%%
   \ITEE{#2}{FunkAnalPril}{\ITE{\equal{#1}{+}}%%
      {Funktsional'ny\u{\i} Analiz i Ego Prilozheniya}{Funkts. Anal. Prilozh.}}%%
   \ITEE{#2}{GTopA}{\ITE{\equal{#1}{+}}%%
      {General Topology and its Applications}{General Topol. Appl.}}%%
   \ITEE{#2}{HJM}{\ITE{\equal{#1}{+}}%%
      {Houston Journal of Mathematics}{Houston J. Math.}}%%
   \ITEE{#2}{IllinoisJM}{\ITE{\equal{#1}{+}}%%
      {Illinois Journal of Mathematics}{Illinois J. Math.}}%%
   \ITEE{#2}{IndagMP}{\ITE{\equal{#1}{+}}%%
      {Indagationes Mathematicae (Proceedings)}{Indagationes Math. Proc.}}%%
   \ITEE{#2}{IndianaUMJ}{\ITE{\equal{#1}{+}}%%
      {Indiana University Mathematical Journal}{Indiana Univ. Math. J.}}%%
   \ITEE{#2}{InHauEtSPM}{\ITE{\equal{#1}{+}}%%
      {Inst. Hautes \'{E}tudes Sci. Publ. Math.}{Inst. Hautes \'{E}tudes Sci. Publ. Math.}}%%
   \ITEE{#2}{IEOT}{\ITE{\equal{#1}{+}}%%
      {Integral Equations and Operator Theory}{Integr. Equ. Oper. Theory}}%%
   \ITEE{#2}{IsraelJM}{\ITE{\equal{#1}{+}}%%
      {Israel Journal of Mathematics}{Israel J. Math.}}%%
   \ITEE{#2}{JAusMSA}{\ITE{\equal{#1}{+}}%%
      {Journal of the Australian Mathematical Society. Series A}{J. Aust. Math. Soc. Ser. A}}%%
   \ITEE{#2}{JCA}{\ITE{\equal{#1}{+}}%%
      {Journal of Convex Analysis}{J. Convex Anal.}}%%
   \ITEE{#2}{JChinUST}{\ITE{\equal{#1}{+}}%%
      {J. China Univ. Sci. Tech.}{J. China Univ. Sci. Tech.}}%%
   \ITEE{#2}{JFA}{\ITE{\equal{#1}{+}}%%
      {Journal of Functional Analysis}{J. Funct. Anal.}}%%
   \ITEE{#2}{JKoreanMS}{\ITE{\equal{#1}{+}}%%
      {Journal of the Korean Mathematical Society}{J. Korean Math. Soc.}}%%
   \ITEE{#2}{JMAnApp}{\ITE{\equal{#1}{+}}%%
      {J. Math. Anal. Appl.}{J. Math. Anal. Appl.}}%%
   \ITEE{#2}{JOT}{\ITE{\equal{#1}{+}}%%
      {Journal of Operator Theory}{J. Operator Theory}}%%
   \ITEE{#2}{KodaiMSemRep}{\ITE{\equal{#1}{+}}%%
      {Kodai Math. Sem. Rep.}{Kodai Math. Sem. Rep.}}%%
   \ITEE{#2}{LAA}{\ITE{\equal{#1}{+}}%%
      {Linear Algebra and its Applications}{Linear Algebra Appl.}}%%
   \ITEE{#2}{LMLA}{\ITE{\equal{#1}{+}}%%
      {Linear and Multilinear Algebra}{Linear Multilinear Algebra}}%%
   \ITEE{#2}{LNM}{\ITE{\equal{#1}{+}}%%
      {Lecture Notes in Mathematics}{Lecture Notes Math.}}%%
   \ITEE{#2}{MathJap}{\ITE{\equal{#1}{+}}%%
      {Math. Japon.}{Math. Japon.}}%%
   \ITEE{#2}{MLQ}{\ITE{\equal{#1}{+}}%%
      {Mathematical Logic Quarterly}{Math. Log. Q.}}%%
   \ITEE{#2}{MProcCambPhS}{\ITE{\equal{#1}{+}}%%
      {Mathematical Proceedings of the Cambridge Philosophical Society}{Math. Proc. Cambridge Phil. Soc.}}%%
   \ITEE{#2}{MMag}{\ITE{\equal{#1}{+}}%%
      {Mathematics Magazine}{Math. Mag.}}%%
   \ITEE{#2}{MSb}{\ITE{\equal{#1}{+}}%%
      {Matematicheski\u{\i} Sbornik}{Mat. Sb.}}%%
   \ITEE{#2}{MStud}{\ITE{\equal{#1}{+}}%%
      {Matematychni Studi\"{\i}}{Mat. Stud.}}%%
   \ITEE{#2}{MScand}{\ITE{\equal{#1}{+}}%%
      {Mathematica Scandinavica}{Math. Scand.}}%%
   \ITEE{#2}{MAnn}{\ITE{\equal{#1}{+}}%%
      {Mathematische Annalen}{Math. Ann.}}%%
   \ITEE{#2}{MAMS}{\ITE{\equal{#1}{+}}%%
      {Memoirs of the American Mathematical Society}{Mem. Amer. Math. Soc.}}%%
   \ITEE{#2}{MichMJ}{\ITE{\equal{#1}{+}}%%
      {Michigan Mathematical Journal}{Mich. Math. J.}}%%
   \ITEE{#2}{MonatM}{\ITE{\equal{#1}{+}}%%
      {Monatshefte f\"{u}r Mathematik}{Mh. Math.}}%%
   \ITEE{#2}{NonlinA}{\ITE{\equal{#1}{+}}%%
      {Nonlinear Analysis: Theory, Methods \& Applications}{Nonlinear Anal.}}%%
   \ITEE{#2}{NAMS}{\ITE{\equal{#1}{+}}%%
      {Notices of the American Mathematical Society}{Notices Amer. Math. Soc.}}%%
   \ITEE{#2}{OpusM}{\ITE{\equal{#1}{+}}%%
      {Opuscula Mathematica}{Opuscula Math.}}%%
   \ITEE{#2}{PacJM}{\ITE{\equal{#1}{+}}%%
      {Pacific Journal of Mathematics}{Pacific J. Math.}}%%
   \ITEE{#2}{PeriodMHung}{\ITE{\equal{#1}{+}}%%
      {Periodica Mathematica Hungarica}{Period. Math. Hungarica}}%%
   \ITEE{#2}{PAMS}{\ITE{\equal{#1}{+}}%%
      {Proceedings of the American Mathematical Society}{Proc. Amer. Math. Soc.}}%%
   \ITEE{#2}{ProcCambPhS}{\ITE{\equal{#1}{+}}%%
      {Proceedings of the Cambridge Philosophical Society}{Proc. Cambridge Phil. Soc.}}%%
   \ITEE{#2}{ProcImpAcadTokyo}{\ITE{\equal{#1}{+}}%%
      {Proc. Imp. Acad. Tokyo}{Proc. Imp. Acad. Tokyo}}%%
   \ITEE{#2}{ProcKonink}{\ITE{\equal{#1}{+}}%%
      {Proceedings of the Koninklijke Nederlandse Akademie van Wetenschappen}{Nederl. Akad. Wetensch. Proc. Ser. A}}%%
   \ITEE{#2}{PLondMS}{\ITE{\equal{#1}{+}}%%
      {Proceedings of the London Mathematical Society}{Proc. London Math. Soc.}}%%
   \ITEE{#2}{PNatlUSA}{\ITE{\equal{#1}{+}}%%
      {Proceedings of the National Academy of Sciences of the United States of America}{Proc. Natl. Acad. Sci. USA}}%%
   \ITEE{#2}{PublRIMSKyoto}{\ITE{\equal{#1}{+}}%%
      {Publ. Res. Inst. Math. Sci. Kyoto Univ.}{Publ. Res. Inst. Math. Sci.}}%%
   \ITEE{#2}{PWN}{\ITE{\equal{#1}{+}}%%
      {PWN -- Polish Scientific Publishers, Warszawa}{PWN -- Polish Scientific Publishers, Warszawa}}%%
   \ITEE{#2}{RCMP}{\ITE{\equal{#1}{+}}%%
      {Rendiconti del Circolo Matematico di Palermo}{Rend. Circ. Mat. Palermo}}%%
   \ITEE{#2}{RussMS}{\ITE{\equal{#1}{+}}%%
      {Russian Mathematical Surveys}{Russian Math. Surveys}}%%
   \ITEE{#2}{SbM}{\ITE{\equal{#1}{+}}%%
      {Sbornik: Mathematics}{Sb. Math.}}%%
   \ITEE{#2}{SciRepTokyoA}{\ITE{\equal{#1}{+}}%%
      {Science Reports of Tokyo Kyoiku Daigaku, Section A}{Sci. Rep. Tokyo Kyoiku Daigaku Sect. A}}%%
   \ITEE{#2}{SeminProbStras}{\ITE{\equal{#1}{+}}%%
      {S\'{e}minaire de probabilit\'{e}s de Strasbourg}{S\'{e}min. Prob. Strasbourg}}%%
   \ITEE{#2}{SIAMJMAA}{\ITE{\equal{#1}{+}}%%
      {SIAM Journal on Matrix Analysis and Applications}{SIAM J. Matrix Anal. Appl.}}%%
   \ITEE{#2}{SibirMZ}{\ITE{\equal{#1}{+}}%%
      {Sibirski\v{\i} Mat. \v{Z}hurnal}{Sibirsk. Mat. \v{Z}.}}%%
   \ITEE{#2}{SM}{\ITE{\equal{#1}{+}}%%
      {Studia Mathematica}{Studia Math.}}%%
   \ITEE{#2}{TAMS}{\ITE{\equal{#1}{+}}%%
      {Transactions of the American Mathematical Society}{Trans. Amer. Math. Soc.}}%%
   \ITEE{#2}{TohokuMJ}{\ITE{\equal{#1}{+}}%%
      {T\^{o}hoku Mathematical Journal}{T\^{o}hoku Math. J.}}%%
   \ITEE{#2}{TomskUnivRev}{\ITE{\equal{#1}{+}}%%
      {Tomsk Universitet Review}{Tomsk. Univ. Rev.}}%%
   \ITEE{#2}{TopA}{\ITE{\equal{#1}{+}}%%
      {Topology and its Applications}{Topology Appl.}}%%
   \ITEE{#2}{TopMethNA}{\ITE{\equal{#1}{+}}%%
      {Topological Methods in Nonlinear Analysis}{Topol. Methods Nonlinear Anal.}}%%
   \ITEE{#2}{TsukubaJM}{\ITE{\equal{#1}{+}}%%
      {Tsukuba Journal of Mathematics}{Tsukuba J. Math.}}%%
   \ITEE{#2}{UspekhiMN}{\ITE{\equal{#1}{+}}%%
      {Uspekhi Matem. Nauk}{Uspekhi Mat. Nauk}}%%
   }
\newcommand{\paplist}[3][]{%%
   \ITEE{#3}{NIAkhiezer,IMGlazman1993}{%%
      \BIb{#2}{N.I. Akhiezer and I.M. Glazman}%%
         {Theory of Linear Operators in Hilbert Space}%%
         {Dover Publications, Inc., New York}{1993}{#1}}%%
   \ITEE{#3}{RDAnderson1967}{%%
      \BIB{#2}{R.D. Anderson}%%
         {On topological infinite deficiency}%%
         {\jRN{MichMJ}}{14}{1967}{365--383}{#1}}%%
   \ITEE{#3}{RDAnderson,JMcCharen1970}{%%
      \BIB{#2}{R.D. Anderson and J. McCharen}%%
         {On extending homeomorphisms to Fr\'{e}chet manifolds}%%
         {\jRN{PAMS}}{25}{1970}{283--289}{#1}}%%
   \ITEE{#3}{RDAnderson,DWCurtis,JVanMill1982}{%%
      \BIB{#2}{R.D. Anderson, D.W. Curtis, J. van Mill}%%
         {A fake topological Hilbert space}%%
         {\jRN{TAMS}}{272}{1982}{311--321}{#1}}%%
   \ITEE{#3}{RArens,JEells1956}{%%
      \BIB{#2}{R. Arens and J. Eells}%%
         {On embedding uniform and topological spaces}%%
         {\jRN{PacJM}}{6}{1956}{397--403}{#1}}%%
   \ITEE{#3}{NAronszajn,PPanitchpakdi1956}{%%
      \BIB{#2}{N. Aronszajn and P. Panitchpakdi}%%
         {Extension of uniformly continuous transformations and hyperconvex metric spaces}%%
         {\jRN{PacJM}}{6}{1956}{405--439}{#1}}%%
   \ITEE{#3}{KJBabenko1948}{%%
      \BIB{#2}{K.J. Babenko}%%
         {On conjugate functions}%%
         {\jRN{DANSSSR}}{62}{1948}{157--160}{#1}}%%
   \ITEE{#3}{TBanakh1995}{%%
      \BIB{#2}{T.O. Banakh}%%
         {Topology of spaces of probability measures, I}%%
         {\jRN{MStud}}{5}{1995}{65--87 (Russian)}{#1}}%%
   \ITEE{#3}{TBanakh1995a}{%%
      \BIB{#2}{T.O. Banakh}%%
         {Topology of spaces of probability measures, II}%%
         {\jRN{MStud}}{5}{1995}{88--106 (Russian)}{#1}}%%
   \ITEE{#3}{TBanakh1998}{%%
      \BIB{#2}{T. Banakh}%%
         {Characterization of spaces admitting a homotopy dense embedding into a Hilbert manifold}%%
         {\jRN{TopA}}{86}{1998}{123--131}{#1}}%%
   \ITEE{#3}{TBanakh,TNRadul1997}{%%
      \BIB{#2}{T.O. Banakh and T.N. Radul}%%
         {Topology of spaces of probability measures}%%
         {\jRN{SbM}}{188}{1997}{973--995}{#1}}%%
   \ITEE{#3}{TBanakh,TRadul,MZarichnyi1996}{%%
      \BIb{#2}{T. Banakh, T. Radul, M. Zarichnyi}%%
         {Absorbing sets in infinite-dimensional manifolds}%%
         {VNTL Publishers, Lviv}{1996}{#1}}%%
   \ITEE{#3}{TBanakh,IZarichnyy2008}{%%
      \BIB{#2}{T. Banakh and I. Zarichnyy}%%
         {Topological groups and convex sets homeomorphic to non-separable Hilbert spaces}%%
         {\jRN{CEurJM}}{6}{2008}{77--86}{#1}}%%
   \ITEE{#3}{HBecker,ASKechris1996}{%%
      \BIb{#2}{H. Becker and A.S. Kechris}%%
         {The Descriptive Set Theory of Polish Group Actions \textup{(London Math. Soc. Lecture Note Series, vol. 232)}}%%
         {University Press, Cambridge}{1996}{#1}}%%
   \ITEE{#3}{GBeer1993}{%%
      \BIb{#2}{G. Beer}%%
         {Topologies on Closed and Closed Convex Sets \textup{(Mathematics and Its Applications)}}%%
         {Kluwer Academic Publishers, Dordrecht}{1993}{#1}}%%
   \ITEE{#3}{NEBenamara,NNikolski1999}{%%
      \BIB{#2}{N.E. Benamara and N. Nikolski}%%
         {Resolvent tests for similarity to a normal operator}%%
         {\jRN{PLondMS}}{78}{1999}{585--626}{#1}}%%
   \ITEE{#3}{YBenyamini,JLindenstrauss2000}{%%
      \BIb{#2}{Y. Benyamini and J. Lindenstrauss}%%
         {Geometric nonlinear functional analysis I}%%
         {AMS Colloquium Publications 48}{2000}{#1}}%%
   \ITEE{#3}{SKBerberian1974}{%%
      \BIb{#2}{S.K. Berberian}%%
         {Lectures in Functional Analysis and Operator Theory}%%
         {Graduate Texts in Mathematics 15, Springer-Verlag, New York}{1974}{#1}}%%
   \ITEE{#3}{SNBernstein1954}{%%
      \BIb{#2}{S.N. Bernstein}%%
         {Collected Works II}%%
         {Akad. Nauk SSSR, Moscow}{1954 (Russian)}{#1}}%%
   \ITEE{#3}{CzBessaga,APelczynski1972}{%%
      \BIB{#2}{Cz. Bessaga and A. Pe\l{}czy\'{n}ski}%%
         {On spaces of measurable functions}%%
         {\jRN{SM}}{44}{1972}{597--615}{#1}}%%
   \ITEE{#3}{CzBessaga,APelczynski1975}{%%
      \BIb{#2}{Cz. Bessaga and A. Pe\l{}czy\'{n}ski}%%
         {Selected topics in infinite-dimensional topology}%%
         {\jRN{PWN}}{1975}{#1}}%%
   \ITEE{#3}{MBestvina,JMogilski1986}{%%
      \BIB{#2}{M. Bestvina and J. Mogilski}%%
         {Characterizing certain incomplete infinite-dimensional absolute retracts}%%
         {\jRN{MichMJ}}{33}{1986}{291--313}{#1}}%%
   \ITEE{#3}{MBestvina,PBowers,JMogilsky,JWalsh1986}{%%
      \BIB{#2}{M. Bestvina, P. Bowers, J. Mogilsky, J. Walsh}%%
         {Characterization of Hilbert space manifolds revisited}%%
         {\jRN{TopA}}{24}{1986}{53--69}{#1}}%%
   \ITEE{#3}{RBhatia1997}{%%
      \BIb{#2}{R. Bhatia}%%
         {Matrix Analysis}%%
         {Springer, New York}{1997}{#1}}%%
   \ITEE{#3}{GBirkhoff1936}{%%
      \BIB{#2}{G. Birkhoff}%%
         {A note on topological groups}%%
         {\jRN{ComposM}}{3}{1936}{427--430}{#1}}%%
   \ITEE{#3}{MSBirman,MZSolomjak1987}{%%
      \BIb{#2}{M.S. Birman and M.Z. Solomjak}%%
         {Spectral Theory of Self-Adjoint Operators in Hilbert Space}%%
         {D. Reidel Publishing Co., Dordrecht}{1987}{#1}}%%
   \ITEE{#3}{EBishop1961}{%%
      \BIB{#2}{E. Bishop}%%
         {A generalization of the Stone-Weierstrass theorem}%%
         {\jRN{PacJM}}{11}{1961}{777--783}{#1}}%%
   \ITEE{#3}{BBlackadar2006}{%%
      \BIb{#2}{B. Blackadar}{Operator Algebras. Theory of $\CCc^*$-algebras and von Neumann algebras %%
         \textup{(Encyclopaedia of Mathematical Sciences, vol. 122: Operator Algebras and Non-Commutative Geometry III)}}%%
         {Springer-Verlag, Berlin-Heidelberg}{2006}{#1}}%%
   \ITEE{#3}{JBlass,WHolsztynski1972}{%%
      \BIB{#2}{J. Blass and W. Holszty\'{n}ski}%%
         {Cubical polyhedra and homotopy III}%%
         {\jRN{AttiAccLincRendNat}}{53}{1972}{275--279}{#1}}%%
   \ITEE{#3}{FFBonsall,NJDuncan1973}{%%
      \BIb{#2}{F.F. Bonsall and N.J. Duncan}%%
         {Complete Normed Algebras}%%
         {Springer Verlag, Berlin}{1973}{#1}}%%
   \ITEE{#3}{NBourbaki2002}{%%
      \BIb{#2}{N. Bourbaki}%%
         {Lie Groups and Lie Algebras, Chapters 4--6}%%
         {Springer, New York}{2002}{#1}}%%
   \ITEE{#3}{PLBowers1989}{%%
      \BIB{#2}{P.L. Bowers}%%
         {Limitation topologies on function spaces}%%
         {\jRN{TAMS}}{314}{1989}{421--431}{#1}}%%
   \ITEE{#3}{ABrown1953}{%%
      \BIB{#2}{A. Brown}%%
         {On a class of operators}%%
         {\jRN{PAMS}}{4}{1953}{723--728}{#1}}%%
   \ITEE{#3}{ABrown,CKFong,DWHadwin1978}{%%
      \BIB{#2}{A. Brown, C.-K. Fong, D.W. Hadwin}%%
         {Parts of operators on Hilbert space}%%
         {\jRN{IllinoisJM}}{22}{1978}{306--314}{#1}}%%
   \ITEE{#3}{AMBruckner,JBBruckner,BSThomson1997}{%%
      \BIb{#2}{A.M. Bruckner, J.B. Bruckner, B.S. Thomson}%%
         {Real Analysis}%%
         {Prentice-Hall, New Jersey}{1997}{#1}}%%
   \ITEE{#3}{PJCameron,AMVershik2006}{%%
      \BIB{#2}{P.J. Cameron and A.M. Vershik}%%
         {Some isometry groups of Urysohn space}%%
         {\jRN{AnnPALog}}{143}{2006}{70--78}{#1}}%%
   \ITEE{#3}{CCastaing1966}{%%
      \BIB{#2}{C. Castaing}%%
         {Quelques probl\`{e}mes de mesurabilit\'{e} li\'{e}es \`{a} la th\'{e}orie de la commande}%%
         {\jRN{CRParis}}{262}{1966}{409--411}{#1}}%%
   \ITEE{#3}{JAVanCasteren1980}{%%
      \BIB{#2}{J.A. van Casteren}%%
         {A problem of Sz.-Nagy}%%
         {\jRN{ActaSM}}{42}{1980}{189--194}{#1}}%%
   \ITEE{#3}{JAVanCasteren1983}{%%
      \BIB{#2}{J.A. van Casteren}%%
         {Operators similar to unitary or selfadjoint ones}%%
         {\jRN{PacJM}}{104}{1983}{241--255}{#1}}%%
   \ITEE{#3}{XCatepillan,MPtak,WSzymanski1994}{%%
      \BIB{#2}{X. Catepill\'{a}n, M. Ptak, W. Szyma\'{n}ski}%%
         {Multiple canonical decompositions of families of operators and a model of quasinormal families}%%
         {\jRN{PAMS}}{121}{1994}{1165--1172}{#1}}%%
   \ITEE{#3}{RCauty1994}{%%
      \BIB{#2}{R. Cauty}%%
         {Un espace m\'{e}trique lin\'{e}aire qui n'est pas un r\'{e}tracte absolu}%%
         {\jRN{FM}}{146}{1994}{85--99, (French)}{#1}}%%
   \ITEE{#3}{TAChapman1971}{%%
      \BIB{#2}{T.A. Chapman}%%
         {Deficiency in infinite-dimensional manifolds}%%
         {\jRN{GTopA}}{1}{1971}{263--272}{#1}}%%
   \ITEE{#3}{TAChapman1976}{%%
      \BIb{#2}{T.A. Chapman}%%
         {Lectures on Hilbert cube manifolds}%%
         {C.B.M.S. Regional Conference Series in Math. No 28, Amer. Math. Soc.}{1976}{#1}}%%
   \ITEE{#3}{RBChuaqui1977}{%%
      \BIB{#2}{R.B. Chuaqui}%%
         {Measures invariant under a group of transformations}%%
         {\jRN{PacJM}}{68}{1977}{313--329}{#1}}%%
   \ITEE{#3}{JBConway1985}{%%
      \BIb{#2}{J.B. Conway}%%
         {A Course in Functional Analysis}%%
         {Springer-Verlag, New York}{1985}{#1}}%%
   \ITEE{#3}{JBConway2000}{%%
      \BIb{#2}{J.B. Conway}%%
         {A Course in Operator Theory}%%
         {(Graduate Studies in Mathematics, vol. 21) Amer. Math. Soc., Providence}{2000}{#1}}%%
   \ITEE{#3}{GCorach,AMaestripieri,MMbekhta2009}{%%
      \BIB{#2}{G. Corach, A. Maestripieri, M. Mbekhta}%%
         {Metric and homogeneous structure of closed range operators}%%
         {\jRN{JOT}}{61}{2009}{171--190}{#1}}%%
   \ITEE{#3}{MJCowen,RGDouglas1978}{%%
      \BIB{#2}{M.J. Cowen and R.G. Douglas}%%
         {Complex geometry and operator theory}%%
         {\jRN{ActaM}}{141}{1978}{187--261}{#1}}%%
   \ITEE{#3}{DWCurtis1985}{%%
      \BIB{#2}{D.W. Curtis}%%
         {Boundary sets in the Hilbert cube}%%
         {\jRN{TopA}}{20}{1985}{201--221}{#1}}%%
   \ITEE{#3}{MMDay1958}{%%
      \BIb{#2}{M.M. Day}%%
         {Normed Linear Spaces}%%
         {Springer Verlag, Berlin}{1958}{#1}}%%
   \ITEE{#3}{CDellacherie1967}{%%
      \BIB{#2}{C. Dellacherie}%%
         {Un compl\'{e}ment au th\'{e}or\`{e}me de Weierstrass-Stone}%%
         {\jRN{SeminProbStras}}{1}{1967}{52--53}{#1}}%%
   \ITEE{#3}{JJDijkstra1987}{%%
      \BIB{#2}{J.J. Dijkstra}%%
         {Strong negligibility of $\sigma$-compacta does not characterize Hilbert space}%%
         {\jRN{PacJM}}{127}{1987}{19--30}{#1}}%%
   \ITEE{#3}{JJDijkstra1990}{%%
      \BIB{#2}{J.J. Dijkstra}%%
         {Characterizing Hilbert space topology in terms of strong negligibility}%%
         {\jRN{ComposM}}{75}{1990}{299--306}{#1}}%%
   \ITEE{#3}{TDobrowolski,WMarciszewski2002}{%%
      \BIB{#2}{T. Dobrowolski and W. Marciszewski}%%
         {Failure of the Factor Theorem for Borel pre-Hilbert spaces}%%
         {\jRN{FM}}{175}{2002}{53--68}{#1}}%%
   \ITEE{#3}{TDobrowolski,JMogilski1990}{%%
      \BiB{#2}{T. Dobrowolski and J. Mogilski}%%
         {Problems on Topological Classification of Incomplete Metric Spaces}{Chapter 25 in:}%%
         {Open Problems in Topology}{J. van Mill and G.M. Reed (eds.), North-Holland Amsterdam}{1990}{411--429}{#1}}%%
   \ITEE{#3}{TDobrowolski,HTorunczyk1981}{%%
      \BIB{#2}{T. Dobrowolski and H. Toru\'{n}czyk}%%
         {Separable complete ANR's admitting a group structure are Hilbert manifolds}%%
         {\jRN{TopA}}{12}{1981}{229--235}{#1}}%%
   \ITEE{#3}{RGDouglas1966}{%%
      \BIB{#2}{R.G. Douglas}%%
         {On majorization, factorization and range inclusion of operators in Hilbert space}%%
         {\jRN{PAMS}}{17}{1966}{413--416}{#1}}%%
   \ITEE{#3}{CHDowker1947}{%%
      \BIB{#2}{C.H. Dowker}%%
         {Mapping theorems for non-compact spaces}%%
         {\jRN{AmJM}}{69}{1947}{200--242}{#1}}%%
   \ITEE{#3}{CHDowker1952}{%%
      \BIB{#2}{C.H. Dowker}%%
         {Topology of metric complexes}%%
         {\jRN{AmJM}}{74}{1952}{555--577}{#1}}%%
   \ITEE{#3}{JDugundji1951}{%%
      \BIB{#2}{J. Dugundji}%%
         {An extension of Tietze's theorem}%%
         {\jRN{PacJM}}{1}{1951}{353--367}{#1}}%%
   \ITEE{#3}{JDugundji1958}{%%
      \BIB{#2}{J. Dugundji}%%
         {Absolute neighborhood retracts and local connectedness for arbitrary metric spaces}%%
         {\jRN{ComposM}}{13}{1958}{229--246}{#1}}%%
   \ITEE{#3}{JDugundji1965}{%%
      \BIB{#2}{J. Dugundji}%%
         {Locally equiconnected spaces and absolute neighborhood retracts}%%
         {\jRN{FM}}{57}{1965}{187--193}{#1}}%%
   \ITEE{#3}{NDunford,JTSchwartz1958}{%%
      \BIb{#2}{N. Dunford and J.T. Schwartz}%%
         {Linear Operators, part I}%%
         {Interscience Publishers, New York}{1958}{#1}}%%
   \ITEE{#3}{NDunford,JTSchwartz1963}{%%
      \BIb{#2}{N. Dunford and J.T. Schwartz}%%
         {Linear Operators, part II}%%
         {Interscience Publishers, New York}{1963}{#1}}%%
   \ITEE{#3}{NDunford,JTSchwartz1971}{%%
      \BIb{#2}{N. Dunford and J.T. Schwartz}%%
         {Linear Operators, part III}%%
         {Wiley-Interscience, New York}{1971}{#1}}%%
   \ITEE{#3}{MLEaton,MDPerlman1977}{%%
      \BIB{#2}{M.L. Eaton and M.D. Perlman}%%
         {Reflection groups, generalized Schur functions and the geometry of majorization}%%
         {\jRN{AnnProb}}{5}{1977}{829--860}{#1}}%%
   \ITEE{#3}{EGEffros1965}{%%
      \BIB{#2}{E.G. Effros}%%
         {The Borel space of von Neumann algebras on a separable Hilbert space}%%
         {\jRN{PacJM}}{15}{1965}{1153--1164}{#1}}%%
   \ITEE{#3}{EGEffros1966}{%%
      \BIB{#2}{E.G. Effros}%%
         {Global structure in von Neumann algebras}%%
         {\jRN{TAMS}}{121}{1966}{434--454}{#1}}%%
   \ITEE{#3}{REspinola,MAKhamsi2001}{%%
      \BiB{#2}{R. Espinola and M.A. Khamsi}%%
         {Introduction to hyperconvex spaces}{Chapter XIII in:}{Handbook of Metric Fixed Point Theory}%%
         {W.A. Kirk and B. Sims (editors), Kluwer Academic Publishers}{2001}{391--435}{#1}}%%
   \ITEE{#3}{PAFillmore,JPWilliams1971}{%%
      \BIB{#2}{P.A. Fillmore and J.P. Williams}%%
         {On operator ranges}%%
         {\jRN{AdvM}}{7}{1971}{254--281}{#1}}%%
   \ITEE{#3}{JEells,NHKuiper1969}{%%
      \BIB{#2}{J. Eells and N.H. Kuiper}%%
         {Homotopy negligible subsets in infinite-dimensional manifolds}%%
         {\jRN{ComposM}}{21}{1969}{151--161}{#1}}%%
   \ITEE{#3}{REngelking1977}{%%
      \BIb{#2}{R. Engelking}%%
         {General Topology}%%
         {\jRN{PWN}}{1977}{#1}}%%
   \ITEE{#3}{REngelking1978}{%%
      \BIb{#2}{R. Engelking}%%
         {Dimension Theory}%%
         {\jRN{PWN}}{1978}{#1}}%%
   \ITEE{#3}{REngelking1989}{%%
      \BIb{#2}{R. Engelking}%%
         {General Topology. Revised and completed edition \textup{(Sigma series in pure mathematics, vol. 6)}}%%
         {Heldermann Verlag, Berlin}{1989}{#1}}%%
   \ITEE{#3}{PErdos,RDMauldin1976}{%%
      \BIB{#2}{P. Erd\"{o}s and R.D. Mauldin}%%
         {The nonexistence of certain invariant measures}%%
         {\jRN{PAMS}}{59}{1976}{321--322}{#1}}%%
   \ITEE{#3}{JErnest1976}{%%
      \BIB{#2}{J. Ernest}%%
         {Charting the operator terrain}%%
         {\jRN{MAMS}}{171}{1976}{207 pp}{#1}}%%
   \ITEE{#3}{RHFox1943}{%%
      \BIB{#2}{R.H. Fox}%%
         {On fiber spaces, II}%%
         {\jRN{BAMS}}{49}{1943}{733--735}{#1}}%%
   \ITEE{#3}{NAFriedman1970}{%%
      \BIb{#2}{N.A. Friedman}%%
         {Introduction to ergodic theory}%%
         {Van Nostrand Reinhold Company}{1970}{#1}}%%
   \ITEE{#3}{MFujii,MKajiwara,YKato,FKubo1976}{%%
      \BIB{#2}{M. Fujii, M. Kajiwara, Y. Kato, F. Kubo}%%
         {Decompositions of operators in Hilbert spaces}%%
         {\jRN{MathJap}}{21}{1976}{117--120}{#1}}%%
   \ITEE{#3}{SGao,ASKechris2003}{%%
      \BIB{#2}{S. Gao and A.S. Kechris}%%
         {On the classification of Polish metric spaces up to isometry}%%
         {\jRN{MAMS}}{161}{2003}{viii+78}{#1}}%%
   \ITEE{#3}{MIGarrido,FMontalvo1991}{%%
      \BIB{#2}{M.I. Garrido and F. Montalvo}%%
         {On some generalizations of the Kakutani-Stone and Stone-Weierstrass theorems}%%
         {\jRN{ExtrM}}{6}{1991}{156--159}{#1}}%%
   \ITEE{#3}{LGe,JShen2002}{%%
      \BIB{#2}{L. Ge and J. Shen}%%
         {Generator problem for certain property T factors}%%
         {\jRN{PNAS}}{99}{2002}{565--567}{#1}}%%
   \ITEE{#3}{IMGelfand,MANaimark1943}{%%
      \BIB{#2}{I.M. Gelfand and M.A. Naimark}%%
         {On the embedding of normed rings into the ring of operators in Hilbert space}%%
         {\jRN{MSb}}{12}{1943}{197--213}{#1}}%%
   \ITEE{#3}{FGesztesy,MMalamud,MMitrea,SNaboko2009}{%%
      \BIB{#2}{F. Gesztesy, M. Malamud, M. Mitrea, S. Naboko}%%
         {Generalized polar decompositions for closed operators in Hilbert spaces and some applications}%%
         {\jRN{IEOT}}{64}{2009}{83--113}{#1}}%%
   \ITEE{#3}{LGillman,MJerison1960}{%%
      \BIb{#2}{L. Gillman and M. Jerison}%%
         {Rings of continuous functions}%%
         {New York}{1960}{#1}}%%
   \ITEE{#3}{JGlimm1960}{%%
      \BIB{#2}{J. Glimm}%%
         {A Stone-Weierstrass theorem for $\CCc^*$-algebras}%%
         {\jRN{AnnM}}{72}{1960}{216--244}{#1}}%%
   \ITEE{#3}{GGodefroy,NJKalton2003}{%%
      \BIB{#2}{G. Godefroy and N.J. Kalton}%%
         {Lipschitz-free Banach spaces}%%
         {\jRN{SM}}{159}{2003}{121--141}{#1}}%%
   \ITEE{#3}{ICGohberg,MGKrein1967}{%%
      \BIB{#2}{I.C. Gohberg and M.G. Krein}%%
         {On a description of contraction operators similar to unitary ones}%%
         {\jRN{FunkAnalPril}}{1}{1967}{38--60}{#1}}%%
   \ITEE{#3}{ELGriffinJr1953}{%%
      \BIB{#2}{E.L. Griffin Jr.}%%
         {Some contributions to the theory of rings of operators}%%
         {\jRN{TAMS}}{75}{1953}{471--504}{#1}}%%
   \ITEE{#3}{ELGriffinJr1955}{%%
      \BIB{#2}{E.L. Griffin Jr.}%%
         {Some contributions to the theory of rings of operators II}%%
         {\jRN{TAMS}}{79}{1955}{389--400}{#1}}%%
   \ITEE{#3}{MGromov1981}{%%
      \BIB{#2}{M. Gromov}%%
         {Groups of polynomial growth and expanding maps}%%
         {\jRN{InHauEtSPM}}{53}{1981}{53--73}{#1}}%%
   \ITEE{#3}{MGromov1999}{%%
      \BIb{#2}{M. Gromov}%%
         {Metric Structures for Riemannian and Non-Riemannian Spaces}%%
         {Progress in Math. \textbf{152}, Birkh\"{a}user}{1999}{#1}}%%
   \ITEE{#3}{JDeGroot1956}{%%
      \BIB{#2}{J. de Groot}%%
         {Non-archimedean metrics in topology}%%
         {\jRN{PAMS}}{7}{1956}{948--953}{#1}}%%
   \ITEE{#3}{LCGrove,CTBenson1985}{%%
      \BIb{#2}{L.C. Grove and C.T. Benson}%%
         {Finite Reflection Group}%%
         {2nd ed., Springer-Verlag}{1985}{#1}}%%
   \ITEE{#3}{VIGurarii1966}{%%
      \BIB{#2}{V.I. Gurari\v{\i}}{Spaces of universal placement, isotropic spaces and a problem of Mazur %%
         on rotations of Banach spaces \textup{(Russian)}}%%
         {\jRN{SibirMZ}}{7}{1966}{1002--1013}{#1}}%%
   \ITEE{#3}{DWHadwin1976}{%%
      \BIB{#2}{D.W. Hadwin}%%
         {An operator-valued spectrum}%%
         {\jRN{NAMS}}{23}{1976}{A-163}{#1}}%%
   \ITEE{#3}{DWHadwin1977}{%%
      \BIB{#2}{D.W. Hadwin}%%
         {An operator-valued spectrum}%%
         {\jRN{IndianaUMJ}}{26}{1977}{329--340}{#1}}%%
   \ITEE{#3}{HHahn1932}{%%
      \BIb{#2}{H. Hahn}%%
         {Reelle Funktionen I}%%
         {Leipzig}{1932}{#1}}%%
   \ITEE{#3}{PRHalmos1950}{%%
      \BIb{#2}{P.R. Halmos}%%
         {Measure theory}%%
         {Van Nostrand, New York}{1950}{#1}}%%
   \ITEE{#3}{PRHalmos1951}{%%
      \BIb{#2}{P.R. Halmos}%%
         {Introduction to Hilbert Space and the Theory of Spectral Multiplicity}%%
         {Chelsea Publishing Company, New York}{1951}{#1}}%%
   \ITEE{#3}{PRHalmos1956}{%%
      \BIb{#2}{P.R. Halmos}%%
         {Lectures on Ergodic Theory}%%
         {Publ. Math. Soc. Japan, Tokyo}{1956}{#1}}%%
   \ITEE{#3}{PRHalmos1982}{%%
      \BIb{#2}{P.R. Halmos}%%
         {A Hilbert Space Problem Book}%%
         {Springer-Verlag New York Inc.}{1982}{#1}}%%
  \ITEE{#3}{PRHalmos,JEMcLaughlin1963}{%%
      \BIB{#2}{P.R. Halmos and J.E. McLaughlin}%%
         {Partial isometries}%%
         {\jRN{PacJM}}{13}{1963}{585--596}{#1}}%%
   \ITEE{#3}{RWHansell1972}{%%
      \BIB{#2}{R.W. Hansell}%%
         {On the nonseparable theory of Borel and Souslin sets}%%
         {\jRN{BAMS}}{78}{1972}{236--241}{#1}}%%
   \ITEE{#3}{FHausdorff1930}{%%
      \BIB{#2}{F. Hausdorff}%%
         {Erweiterung einer Hom\"{o}omorphie}%%
         {\jRN{FM}}{16}{1930}{353--360}{#1}}%%
   \ITEE{#3}{FHausdorff1934}{%%
      \BIB{#2}{F. Hausdorff}%%
         {\"{U}ber innere Abbildungen}%%
         {\jRN{FM}}{23}{1934}{279--291}{#1}}%%
   \ITEE{#3}{FHausdorff1938}{%%
      \BIB{#2}{F. Hausdorff}%%
         {Erweiterung einer stetigen Abbildung}%%
         {\jRN{FM}}{30}{1938}{40--47}{#1}}%%
   \ITEE{#3}{DWHenderson1971}{%%
      \BIB{#2}{D.W. Henderson}%%
         {Corrections and extensions of two papers about infinite-dimensional manifolds}%%
         {\jRN{GTopA}}{1}{1971}{321--327}{#1}}%%
   \ITEE{#3}{DWHenderson1975}{%%
      \BIB{#2}{D.W. Henderson}%%
         {$Z$-sets in ANR's}%%
         {\jRN{TAMS}}{213}{1975}{205--216}{#1}}%%
   \ITEE{#3}{DWHenderson,RMSchori1970}{%%
      \BIB{#2}{D.W. Henderson and R.M. Schori}%%
         {Topological classification of infinite-dimensional manifolds by homotopy type}%%
         {\jRN{BAMS}}{76}{1970}{121--124}{#1}}%%
   \ITEE{#3}{DWHenderson,JEWest1970}{%%
      \BIB{#2}{D.W. Henderson and J.E. West}%%
         {Triangulated infinite-dimensional manifolds}%%
         {\jRN{BAMS}}{76}{1970}{655--660}{#1}}%%
   \ITEE{#3}{BHoffmann1979}{%%
      \BIB{#2}{B. Hoffmann}%%
         {A compact contractible topological group is trivial}%%
         {\jRN{ArchM}}{32}{1979}{585--587}{#1}}%%
   \ITEE{#3}{DHofmann2002}{%%
      \BIB{#2}{D. Hofmann}%%
         {On a generalization of the Stone-Weierstrass theorem}%%
         {\jRN{ACS}}{10}{2002}{569--592}{#1}}%%
   \ITEE{#3}{GHognas,AMukherjea1995}{%%
      \BIb{#2}{G. H\"ogn\"as and A. Mukherjea}%%
         {Probability Measures on Semigroups. Convolution Products, Random Walks, and Random Matrices}%%
         {Plenum Press, New York}{1995}{#1}}%%
   \ITEE{#3}{MRHolmes1992}{%%
      \BIB{#2}{M.R. Holmes}%%
         {The universal separable metric space of Urysohn and isometric embeddings thereof in Banach spaces}%%
         {\jRN{FM}}{140}{1992}{199--223}{#1}}%%
   \ITEE{#3}{MRHolmes2008}{%%
      \BIB{#2}{M.R. Holmes}%%
         {The Urysohn space embeds in Banach spaces in just one way}%%
         {\jRN{TopA}}{155}{2008}{1479--1482}{#1}}%%
   \ITEE{#3}{RRHolmes,TYTam1999}{%%
      \BIB{#2}{R.R. Holmes and T.Y. Tam}%%
         {Distance to the convex hull of an orbit under the action of a compact group}%%
         {\jRN{JAusMSA}}{66}{1999}{331--357}{#1}}%%
   \ITEE{#3}{RHorn,RMathias1990}{%%
      \BIB{#2}{R. Horn and R. Mathias}%%
         {Cauchy-Schwartz inequalities associated with positive semidefinite matrices}%%
         {\jRN{LAA}}{142}{1990}{63--82}{#1}}%%
   \ITEE{#3}{GEHuhunaisvili1955}{%%
      \BIB{#2}{G.E. Huhunai\v{s}vili}%%
         {On a property of Urysohn's universal metric space}%%
         {\jRN{DANSSSR}}{101}{1955}{607--610 (Russian)}{#1}}%%
   \ITEE{#3}{JEHumphreys1990}{%%
      \BIb{#2}{J.E. Humphreys}%%
         {Reflection Groups and Coxeter Groups}%%
         {Cambridge University Press}{1990}{#1}}%%
   \ITEE{#3}{JRIsbell1964}{%%
      \BIB{#2}{J.R. Isbell}%%
         {Six theorems about injective metric spaces}%%
         {\jRN{CMHelv}}{39}{1964}{65--76}{#1}}%%
   \ITEE{#3}{SIzumino,YKato1985}{%%
      \BIB{#2}{S. Izumino and Y. Kato}%%
         {The closure of invertible operators on Hilbert space}%%
         {\jRN{ActaSM}}{49}{1985}{321--327}{#1}}%%
   \ITEE{#3}{CJiang2004}{%%
      \BIB{#2}{C. Jiang}%%
         {Similarity classification of Cowen-Douglas operators}%%
         {\jRN{CanadJM}}{56}{2004}{742--775}{#1}}%%
   \ITEE{#3}{WBJohnson,JLindenstrauss2001}{%%
      \BiB{#2}{W.B. Johnson and J. Lindenstrauss}{Basic Concepts in the Geometry of Banach Spaces}%%
         {Chapter 1 in:}{Handbook of the Geometry of Banach Spaces, Vol. 1}%%
         {W.B. Johnson and J. Lindenstrauss (editors), Elsevier Science B.V., Amsterdam}{2001}{1--84}{#1}}%%
   \ITEE{#3}{IBJung,JStochel2008}{%%
      \BIB{#2}{I.B. Jung and J. Stochel}%%
         {Subnormal operators whose adjoints have rich point spectrum}%%
         {\jRN{JFA}}{255}{2008}{1797--1816}{#1}}%%
   \ITEE{#3}{RVKadison,JRRingrose1983}{%%
      \BIb{#2}{R.V. Kadison and J.R. Ringrose}%%
         {Fundamentals of the Theory of Operator Algebras. Volume I: Elementary Theory}%%
         {Academic Press, Inc., New York-London}{1983}{#1}}%%
   \ITEE{#3}{RVKadison,JRRingrose1986}{%%
      \BIb{#2}{R.V. Kadison and J.R. Ringrose}%%
         {Fundamentals of the Theory of Operator Algebras. Volume II: Advanced Theory}%%
         {Academic Press, Inc., Orlando-London}{1986}{#1}}%%
   \ITEE{#3}{SKakutani1936}{%%
      \BIB{#2}{S. Kakutani}%%
         {\"{U}ber die Metrisation der topologischen Gruppen}%%
         {\jRN{ProcImpAcadTokyo}}{12}{1936}{82--84}{#1}}%%
   \ITEE{#3}{SKakutani1938}{%%
      \BIB{#2}{S. Kakutani}%%
         {Two fixed-point theorems concerning bicompact convex sets}%%
         {\jRN{ProcImpAcadTokyo}}{14}{1938}{242--245}{#1}}%%
   \ITEE{#3}{SKakutani1941}{%%
      \BIB{#2}{S. Kakutani}%%
         {Concrete representation of abstract L-spaces}%%
         {\jRN{AnnM}}{42}{1941}{523--537}{#1}}%%
   \ITEE{#3}{SKakutani1941a}{%%
      \BIB{#2}{S. Kakutani}%%
         {Concrete representation of abstract M-spaces}%%
         {\jRN{AnnM}}{42}{1941}{994--1024}{#1}}%%
   \ITEE{#3}{NKalton2007}{%%
      \BIB{#2}{N. Kalton}%%
         {Extending Lipschitz maps into $\CCc(K)$-spaces}%%
         {\jRN{IsraelJM}}{162}{2007}{275--315}{#1}}%%
   \ITEE{#3}{RKane2001}{%%
      \BIb{#2}{R. Kane}%%
         {Reflection Groups and Invariant Theory}%%
         {Canadian Mathematical Society, Springer}{2001}{#1}}%%
   \ITEE{#3}{VKannan,SRRaju1980}{%%
      \BIB{#2}{V. Kannan and S.R. Raju}%%
         {The nonexistence of invariant universal measures on semigroups}%%
         {\jRN{PAMS}}{78}{1980}{482--484}{#1}}%%
   \ITEE{#3}{IKaplansky1951}{%%
      \BIB{#2}{I. Kaplansky}%%
         {A theorem on rings of operators}%%
         {\jRN{PacJM}}{1}{1951}{227--232}{#1}}%%
   \ITEE{#3}{MKatetov1988}{%%
      \BiB{#2}{M. Kat\v{e}tov}{On universal metric spaces}{in: Frolik (ed.),}%%
         {General Topology and its Relations to Modern Analysis and Algebra VI. Proceedings of the Sixth Prague %%
         Topological Symposium 1986}{Heldermann Verlag Berlin}{1988}{323--330}{#1}}%%
   \ITEE{#3}{YKatznelson1960}{%%
      \BIB{#2}{Y. Katznelson}%%
         {Sur les alg\'{e}bres dont les \'{e}l\'{e}ments non n\'{e}gatifs admettent des racines carr\'{e}es}%%
         {\jRN{AnnSciEcNormSupT}}{77}{1960}{167--174}{#1}}%%
   \ITEE{#3}{OHKeller1931}{%%
      \BIB{#2}{O.H. Keller}%%
         {Die Homoiomorphie der kompakten konvexen Mengen in Hilbertschen Raum}%%
         {\jRN{MAnn}}{105}{1931}{748--758}{#1}}%%
   \ITEE{#3}{MAKhamsi,WAKirk,CMartinez2000}{%%
      \BIB{#2}{M.A. Khamsi, W.A. Kirk, C. Martinez}%%
         {Fixed point and selection theorems in hyperconvex spaces}%%
         {\jRN{PAMS}}{128}{2000}{3275--3283}{#1}}%%
   \ITEE{#3}{ABKhararazishvili1998}{%%
      \BIb{#2}{A.B. Khararazishvili}%%
         {Transformation groups and invariant measures. Set-theoretic aspects}%%
         {World Scientific Publishing Co., Inc., River Edge, NJ}{1998}{#1}}%%
   \ITEE{#3}{YKijima1987}{%%
      \BIB{#2}{Y. Kijima}%%
         {Fixed points of nonexpansive self-maps of a compact metric space}%%
         {\jRN{JMAnApp}}{123}{1987}{114--116}{#1}}%%
  \ITEE{#3}{JSKim,ChRKim,SGLee1980}{%%
      \BIB{#2}{J.S. Kim, Ch.R. Kim, S.G. Lee}%%
         {Reducing operator valued spectra of a Hilbert space operator}%%
         {\jRN{JKoreanMS}}{17}{1980}{123--129}{#1}}%%
   \ITEE{#3}{JKindler1995}{%%
      \BIB{#2}{J. Kindler}%%
         {Minimax theorems with applications to convex metric spaces}%%
         {\jRN{CollM}}{68}{1995}{179--186}{#1}}%%
   \ITEE{#3}{WAKirk1998}{%%
      \BIB{#2}{W.A. Kirk}%%
         {Hyperconvexity of $\RRR$-trees}%%
         {\jRN{FM}}{156}{1998}{67--72}{#1}}%%
   \ITEE{#3}{VLKleeJr1952}{%%
      \BIB{#2}{V.L. Klee Jr.}%%
         {Invariant metrics in groups (solution of a problem of Banach)}%%
         {\jRN{PAMS}}{3}{1952}{484--487}{#1}}%%
   \ITEE{#3}{HJKowalsky1957}{%%
      \BIB{#2}{H.J. Kowalsky}%%
         {Einbettung metrischer R\"{a}ume}%%
         {\jRN{ArchM}}{8}{1957}{336--339}{#1}}%%
   \ITEE{#3}{WKubis,MRubin2010}{%%
      \BIB{#2}{W. Kubi\'{s} and M. Rubin}%%
         {Extension and reconstruction theorems for the Urysohn universal metric space}%%
         {\jRN{CzMJ}}{60}{2010}{1--29}{#1}}%%
   \ITEE{#3}{KKuratowski1966}{%%
      \BIb{#2}{K. Kuratowski}%%
         {Topology. \textup{Vol. I}}%%
         {\jRN{PWN}}{1966}{#1}}%%
   \ITEE{#3}{KKuratowski,BKnaster1927}{%%
      \BIB{#2}{K. Kuratowski and B. Knaster}%%
         {A connected and connected im kleinen point set which contains no perfect subset}%%
         {\jRN{BAMS}}{33}{1927}{106--109}{#1}}%%
   \ITEE{#3}{KKuratowski,AMostowski1976}{%%
      \BIb{#2}{K. Kuratowski and A. Mostowski}%%
         {Set Theory with an Introduction to Descriptive Set Theory}%%
         {\jRN{PWN}}{1976}{#1}}%%
   \ITEE{#3}{GLewicki1992}{%%
      \BIB{#2}{G. Lewicki}%%
         {Bernstein's ``lethargy'' theorem in metrizable topological linear spaces}%%
         {\jRN{MonatM}}{113}{1992}{213--226}{#1}}%%
   \ITEE{#3}{ASLewis1996}{%%
      \BIB{#2}{A.S. Lewis}%%
         {Group invariance and convex matrix analysis}%%
         {\jRN{SIAMJMAA}}{17}{1996}{927--949}{#1}}%%
   \ITEE{#3}{C-KLi,N-KTsing1991}{%%
      \BIB{#2}{C.-K. Li and N.-K. Tsing}%%
         {$G$-invariant norms and $G(c)$-radii}%%
         {\jRN{LAA}}{150}{1991}{179--194}{#1}}%%
   \ITEE{#3}{AJLazar,JLindenstrauss1971}{%%
      \BIB{#2}{A.J. Lazar and J. Lindenstrauss}%%
         {Banach spaces whose duals are $L_1$ spaces and their representing matrices}%%
         {\jRN{ActaM}}{126}{1971}{165--193}{#1}}%%
   \ITEE{#3}{EHLieb,MLoss1997}{%%
      \BIb{#2}{E.H. Lieb and M. Loss}%%
         {Analysis \textup{(Graduate Studies in Mathematics, vol. 14)}}%%
         {Amer. Math. Soc., Providence, RI}{1997}{#1}}%%
   \ITEE{#3}{ALindenbaum1926}{%%
      \BIB{#2}{A. Lindenbaum}%%
         {Contributions \`{a} l'\'{e}tude de l'espace m\'{e}trique I}%%
         {\jRN{FM}}{8}{1926}{209--222}{#1}}%%
   \ITEE{#3}{DLindenstrauss,LTzafriri1971}{%%
      \BIB{#2}{D. Lindenstrauss and L. Tzafriri}%%
         {On the complemented subspaces problem}%%
         {\jRN{IsraelJM}}{9}{1971}{263--269}{#1}}%%
   \ITEE{#3}{RILoebl1986}{%%
      \BIB{#2}{R.I. Loebl}%%
         {A note on containment of operators}%%
         {\jRN{BAustrMS}}{33}{1986}{279--291}{#1}}%%
   \ITEE{#3}{LHLoomis1945}{%%
      \BIB{#2}{L.H. Loomis}%%
         {Abstract congruence and the uniqueness of Haar measure}%%
         {\jRN{AnnM}}{46}{1945}{348--355}{#1}}%%
   \ITEE{#3}{LHLoomis1949}{%%
      \BIB{#2}{L.H. Loomis}%%
         {Haar measure in uniform structures}%%
         {\jRN{DukeMJ}}{16}{1949}{193--208}{#1}}%%
   \ITEE{#3}{ERLorch1939}{%%
      \BIB{#2}{E.R. Lorch}%%
         {Bicontinuous linear transformation in certain vector spaces}%%
         {\jRN{BAMS}}{45}{1939}{564--569}{#1}}%%
   \ITEE{#3}{ATLundell,SWeingram1969}{%%
      \BIb{#2}{A.T. Lundell and S. Weingram}%%
         {The topology of CW-complexes}%%
         {Litton Educ. Publ.}{1969}{#1}}%%
   \ITEE{#3}{WLusky1976}{%%
      \BIB{#2}{W. Lusky}%%
         {The Gurarij spaces are unique}%%
         {\jRN{ArchM}}{27}{1976}{627--635}{#1}}%%
   \ITEE{#3}{WLusky1977}{%%
      \BIB{#2}{W. Lusky}%%
         {On separable Lindenstrauss spaces}%%
         {\jRN{JFA}}{26}{1977}{103--120}{#1}}%%
   \ITEE{#3}{DMaharam1942}{%%
      \BIB{#2}{D. Maharam}%%
         {On homogeneous measure algebras}%%
         {\jRN{PNatlUSA}}{28}{1942}{108--111}{#1}}%%
   \ITEE{#3}{MMalicki,SSolecki2009}{%%
      \BIB{#2}{M. Malicki and S. Solecki}%%
         {Isometry groups of separable metric spaces}%%
         {\jRN{MProcCambPhS}}{146}{2009}{67--81}{#1}}%%
   \ITEE{#3}{PMankiewicz1972}{%%
      \BIB{#2}{P. Mankiewicz}%%
         {On extension of isometries in normed linear spaces}%%
         {\jRN{BAPolSSSM}}{20}{1972}{367--371}{#1}}%%
   \ITEE{#3}{JMartinezMaurica,MTPellon1987}{%%
      \BIB{#2}{J. Martinez-Maurica and M.T. Pell\'{o}n}%%
         {Non-archimedean Chebyshev centers}%%
         {\jRN{IndagMP}}{90}{1987}{417--421}{#1}}%%
   \ITEE{#3}{KMaurin1980}{%%
      \BIb{#2}{K. Maurin}%%
         {Analysis, Part II}%%
         {D. Reidel, Dordrecht-Boston-London}{1980}{#1}}%%
   \ITEE{#3}{SMazur,SUlam1932}{%%
      \BIB{#2}{S. Mazur and S. Ulam}%%
         {Sur les transformationes isom\'{e}triques d'espaces vectoriels norm\'{e}s}%%
         {\jRN{CRASParis}}{194}{1932}{946--948}{#1}}%%
   \ITEE{#3}{SMazurkiewicz1920}{%%
      \BIB{#2}{S. Mazurkiewicz}%%
         {Sur les lignes de Jordan}%%
         {\jRN{FM}}{1}{1920}{166--209}{#1}}%%
   \ITEE{#3}{SMazurkiewicz,WSierpinski1920}{%%
      \BIB{#2}{S. Mazurkiewicz and W. Sierpi\'{n}ski}%%
         {Contributions a la topologie des ensembles denombrables}%%
         {\jRN{FM}}{1}{1920}{17--27}{#1}}%%
   \ITEE{#3}{MMbekhta1992}{%%
      \BIB{#2}{M. Mbekhta}%%
         {Sur la structure des composantes connexes semi-Fredholm de $B(H)$}%%
         {\jRN{PAMS}}{116}{1992}{521--524}{#1}}%%
   \ITEE{#3}{JEMcCarthy1996}{%%
      \BIB{#2}{J.E. McCarthy}%%
         {Boundary values and Cowen-Douglas curvature}%%
         {\jRN{JFA}}{137}{1996}{1--18}{#1}}%%
   \ITEE{#3}{JMelleray2007}{%%
      \BIB{#2}{J. Melleray}%%
         {Computing the complexity of the relation of isometry between separable Banach spaces}%%
         {\jRN{MLQ}}{53}{2007}{128--131}{#1}}%%
   \ITEE{#3}{JMelleray2007a}{%%
      \BIB{#2}{J. Melleray}%%
         {On the geometry of Urysohn's universal metric space}%%
         {\jRN{TopA}}{154}{2007}{384--403}{#1}}%%
   \ITEE{#3}{JMelleray2008}{%%
      \BIB{#2}{J. Melleray}%%
         {Some geometric and dynamical properties of the Urysohn space}%%
         {\jRN{TopA}}{155}{2008}{1531--1560}{#1}}%%
   \ITEE{#3}{JMelleray,FVPetrov,AMVershik2008}{%%
      \BIB{#2}{J. Melleray, F.V. Petrov, A.M. Vershik}%%
         {Linearly rigid metric spaces and the embedding problem}%%
         {\jRN{FM}}{199}{2008}{177--194}{#1}}%%
   \ITEE{#3}{EMichael1953}{%%
      \BIB{#2}{E. Michael}%%
         {Some extension theorems for continuous functions}%%
         {\jRN{PacJM}}{3}{1953}{789--806}{#1}}%%
   \ITEE{#3}{EMichael1954}{%%
      \BIB{#2}{E. Michael}%%
         {Local properties of topological spaces}%%
         {\jRN{DukeMJ}}{21}{1954}{163--171}{#1}}%%
   \ITEE{#3}{EMichael1956}{%%
      \BIB{#2}{E. Michael}%%
         {Selected selection theorems}%%
         {\jRN{AmMMon}}{58}{1956}{233--238}{#1}}%%
   \ITEE{#3}{EMichael1956a}{%%
      \BIB{#2}{E. Michael}%%
         {Continuous selections. I}%%
         {\jRN{AnnM}}{63}{1956}{361--382}{#1}}%%
   \ITEE{#3}{EMichael1956b}{%%
      \BIB{#2}{E. Michael}%%
         {Continuous selections. II}%%
         {\jRN{AnnM}}{64}{1956}{562--580}{#1}}%%
   \ITEE{#3}{EMichael1959}{%%
      \BIB{#2}{E. Michael}%%
         {A theorem on semi-continuous set-valued functions}%%
         {\jRN{DukeMJ}}{26}{1959}{647--652}{#1}}%%
   \ITEE{#3}{JVanMill1986}{%%
      \BIB{#2}{J. van Mill}%%
         {Another counterexample in ANR theory}%%
         {\jRN{PAMS}}{97}{1986}{136--138}{#1}}%%
   \ITEE{#3}{JVanMill2001}{%%
      \BIb{#2}{J. van Mill}%%
         {The Infinite-Dimensional Topology of Function Spaces %%
         \textup{(North-Holland Mathematical Library, vol. 64)}}%%
         {Elsevier, Amsterdam}{2001}{#1}}%%
   \ITEE{#3}{WMlak1991}{%%
      \BIb{#2}{W. Mlak}%%
         {Hilbert Spaces and Operator Theory}%%
         {PWN --- Polish Scientific Publishers and Kluwer Academic Publishers, Warszawa-Dordrecht}{1991}{#1}}%%
   \ITEE{#3}{JMogilski1979}{%%
      \BIB{#2}{J. Mogilski}%%
         {$CE$-decomposition of $l_2$-manifolds}%%
         {\jRN{BAPolSSSM}}{27}{1979}{309--314}{#1}}%%
   \ITEE{#3}{RLMoore1916}{%%
      \BIB{#2}{R.L. Moore}%%
         {On the foundations of plane analysis situs}%%
         {\jRN{TAMS}}{17}{1916}{131--164}{#1}}%%
   \ITEE{#3}{KMorita1955}{%%
      \BIB{#2}{K. Morita}%%
         {A condition for the metrizability of topological spaces and for $n$-dimensionality}%%
         {\jRN{SciRepTokyoA}}{5}{1955}{33--36}{#1}}%%
   \ITEE{#3}{AMukherjea,NATserpes1976}{%%
      \BIb{#2}{A. Mukherjea and N.A. Tserpes}%%
         {Measures on topological semigroups}%%
         {Springer Lecture Notes in Math. Vol. 547, Berlin}{1976}{#1}}%%
   \ITEE{#3}{JMycielski1974}{%%
      \BIB{#2}{J. Mycielski}%%
         {Remarks on invariant measures in metric spaces}%%
         {\jRN{CollM}}{32}{1974}{105--112}{#1}}%%
   \ITEE{#3}{SNNaboko1984}{%%
      \BIB{#2}{S.N. Naboko}%%
         {Conditions for similarity to unitary and selfadjoint operators}%%
         {\jRN{FunkAnalPril}}{18}{1984}{16--27}{#1}}%%
   \ITEE{#3}{LNachbin1965}{%%
      \BIb{#2}{L. Nachbin}%%
         {The Haar Integral}%%
         {D. Van Nostrand Company, Inc., Princeton-New Jersey-Toronto-New York-London}{1965}{#1}}%%
   \ITEE{#3}{TDNarang,SKGarg1991}{%%
      \BIB{#2}{T.D. Narang and S.K. Garg}%%
         {On the uniqueness of best approximation in non-archimedian spaces}%%
         {\jRN{PeriodMHung}}{22}{1991}{121--124}{#1}}%%
   \ITEE{#3}{JVonNeumann1930}{%%
      \BIB{#2}{J. von Neumann}%%
         {Zur Algebra der Funktionaloperationen und Theorie der normalen Operatoren}%%
         {\jRN{MAnn}}{102}{1930}{370--427}{#1}}%%
   \ITEE{#3}{JVonNeumann1934}{%%
      \BIB{#2}{J. von Neumann}%%
         {Zum Haarschen Mass in topologischen Gruppen}%%
         {\jRN{ComposM}}{1}{1934}{106--114}{#1}}%%
   \ITEE{#3}{JVonNeumann1937}{%%
      \BiB{#2}{J. von Neumann}%%
         {Some matrix-inequalities and metrization of matrix-space}{\jRN{TomskUnivRev}{} \textbf{1} (1937), 286--300; %%
         in }{Collected Works}{Pergamon, New York}{1962}{Vol. 4, 205--219}{#1}}%%
   \ITEE{#3}{JVonNeumann1949}{%%
      \BIB{#2}{J. von Neumann}%%
         {On Rings of Operators. Reduction Theory}%%
         {\jRN{AnnM}}{50}{1949}{401--485}{#1}}%%
   \ITEE{#3}{ONielson1973}{%%
      \BIB{#2}{O. Nielson}%%
         {Borel sets of von Neumann algebras}%%
         {\jRN{AmJM}}{95}{1973}{145--164}{#1}}%%
   \ITEE{#3}{pn2002}{\bibITEM{#2}{#1} \mypaplist{pn1}}%%
   \ITEE{#3}{pn2006a}{\bibITEM{#2}{#1} \mypaplist{pn2}}%%
   \ITEE{#3}{pn2006b}{\bibITEM{#2}{#1} \mypaplist{pn3}}%%
   \ITEE{#3}{pn2007}{\bibITEM{#2}{#1} \mypaplist{pn4}}%%
   \ITEE{#3}{pn2008a}{\bibITEM{#2}{#1} \mypaplist{pn5}}%%
   \ITEE{#3}{pn2008b}{\bibITEM{#2}{#1} \mypaplist{pn6}}%%
   \ITEE{#3}{pn2009a}{\bibITEM{#2}{#1} \mypaplist{pn7}}%%
   \ITEE{#3}{pn2009b}{\bibITEM{#2}{#1} \mypaplist{pn8}}%%
   \ITEE{#3}{pn2009c}{\bibITEM{#2}{#1} \mypaplist{pn9}}%%
   \ITEE{#3}{pn2010a}{\bibITEM{#2}{#1} \mypaplist{pn12}}%%
   \ITEE{#3}{pn2010b}{\bibITEM{#2}{#1} \mypaplist{pn13}}%%
   \ITEE{#3}{pn2011a}{\bibITEM{#2}{#1} \mypaplist{pn10}}%% G-represent
   \ITEE{#3}{pn2011b}{\bibITEM{#2}{#1} \mypaplist{pn15}}%% U-topol
   \ITEE{#3}{pn2011c}{\bibITEM{#2}{#1} \mypaplist{pn16}}%% note-inv
   \ITEE{#3}{pn2011d}{\bibITEM{#2}{#1} \mypaplist{pn17}}%% S-W
   \ITEE{#3}{pn2009x}{%% U-F2
      \bibITEM{#2}{#1} \mypaplist{pn11}}%% U-f
   \ITEE{#3}{pn2010x}{%%
      \bibITEM{#2}{#1} \mypaplist{pn14}}%% NTPVG
   \ITEE{#3}{pnXXXXb}{%% U-F2, U-F, SMF
      \bibITEM{#2}{#1} \mypaplist{pnX2}}%% functor
   \ITEE{#3}{pnXXXXc}{%% SPECTRUM
      \bibITEM{#2}{#1} \mypaplist{pnX3}}%% orbits
   \ITEE{#3}{pnXXXXd}{%% uvAg
      \bibITEM{#2}{#1} \mypaplist{pnX13}}%% note_ANR
   \ITEE{#3}{MNiezgoda1998}{%%
      \BIB{#2}{M. Niezgoda}%%
         {Group majorization and Schur type inequalities}%%
         {\jRN{LAA}}{268}{1998}{9--30}{#1}}%%
   \ITEE{#3}{MNiezgoda1998a}{%%
      \BIB{#2}{M. Niezgoda}%%
         {An analytical characterization of effective and of irreducible groups inducing cone orderings}%%
         {\jRN{LAA}}{269}{1998}{105--114}{#1}}%%
   \ITEE{#3}{MNiezgoda,TYTam2001}{%%
      \BIB{#2}{M. Niezgoda and T.Y. Tam}%%
         {On norm property of $G(c)$-radii and Eaton triples}%%
         {\jRN{LAA}}{336}{2001}{119--130}{#1}}%%
   \ITEE{#3}{APazy1983}{%%
      \BIb{#2}{A. Pazy}{Semigroups of Linear Operators %%
         and Applications to Partial Differential Equations \textup{(Applied Mathematical Sciences, vol. 44)}}%%
         {Springer-Verlag, New York}{1983}{#1}}%%
   \ITEE{#3}{APelc1982}{%%
      \BIB{#2}{A. Pelc}%%
         {Semiregular invariant measures on abelian groups}%%
         {\jRN{PAMS}}{86}{1982}{423--426}{#1}}%%
   \ITEE{#3}{RPenrose1955}{%%
      \BIB{#2}{R. Penrose}%%
         {A generalized inverse for matrices}%%
         {\jRN{ProcCambPhS}}{51}{1955}{406--413}{#1}}%%
   \ITEE{#3}{VPestov2006}{%%
      \BIb{#2}{V. Pestov}%%
         {Dynamics of infinite-dimensional groups. The Ramsey-Dvoretzky-Milman phenomenon}%%
         {University Lecture Series \textbf{40}, AMS, Providence, RI}{2006}{#1}}%%
   \ITEE{#3}{VPestov2007}{%%
      \BiB{#2}{V. Pestov}%%
         {Forty-plus annotated questions about large topological groups}%%
         {in:}{Open Problems in Topology II}{Elliot Pearl (editor), Elsevier B.V., Amsterdam}{2007}{439--450}{#1}}%%
   \ITEE{#3}{PVPetersen1993}{%%
      \BiB{#2}{P.V. Petersen}%%
         {Gromov-Hausdorff convergence of metric spaces}{in book:}{Differential Geometry: Riemannian Geometry %%
         (Los Angeles, CA, 1990)}{Amer. Math. Soc., Providence, RI}{1993}{489--504}{#1}}%%
   \ITEE{#3}{DRamachandran,MMisiurewicz1982}{%%
      \BIB{#2}{D. Ramachandran and M. Misiurewicz}%%
         {Hopf's theorem on invariant measures for a group of transformations}%%
         {\jRN{SM}}{74}{1982}{183--189}{#1}}%%
   \ITEE{#3}{JMRosenblatt1974}{%%
      \BIB{#2}{J.M. Rosenblatt}%%
         {Equivalent invariant measures}%%
         {\jRN{IsraelJM}}{17}{1974}{261--270}{#1}}%%
   \ITEE{#3}{HLRoyden1963}{%%
      \BIb{#2}{H.L. Royden}%%
         {Real Analysis}%%
         {The Macmillan Co., New York}{1963}{#1}}%%
   \ITEE{#3}{WRudin1962}{%%
      \BIb{#2}{W. Rudin}%%
         {Fourier Analysis on Groups \textup{(Interscience Tracts in Pure and Applied Mathematics, Number 12)}}%%
         {Interscience Publishers, New York}{1962}{#1}}%%
   \ITEE{#3}{WRudin1991}{%%
      \BIb{#2}{W. Rudin}%%
         {Functional Analysis}%%
         {McGraw-Hill Science}{1991}{#1}}%%
   \ITEE{#3}{TSaito1972}{%%
      \BiB{#2}{T. Sait\^{o}}{Generations of von Neumann algebras}%%
         {Lecture Notes in Math. vol. 247}{\textup{(}Lecture on Operator Algebras\textup{)}}%%
         {Springer, Berlin-Heidelberg-New York}{1972}{435--531}{#1}}%%
   \ITEE{#3}{KSakai,MYaguchi2003}{%%
      \BIB{#2}{K. Sakai and M. Yaguchi}%%
         {Characterizing manifolds modeled on certain dense subspaces of non-separable Hilbert spaces}%%
         {\jRN{TsukubaJM}}{27}{2003}{143--159}{#1}}%%
   \ITEE{#3}{SSakai1971}{%%
      \BIb{#2}{S. Sakai}%%
         {$\CCc^*$-Algebras and $\WWw^*$-Algebras}%%
         {Springer-Verlag, Berlin-Heidelberg-New York}{1971}{#1}}%%
   \ITEE{#3}{RSchori1971}{%%
      \BIB{#2}{R. Schori}%%
         {Topological stability for infinite-dimensional manifolds}%%
         {\jRN{ComposM}}{23}{1971}{87--100}{#1}}%%
   \ITEE{#3}{JTSchwartz1967}{%%
      \BIb{#2}{J.T. Schwartz}%%
         {$\WWw^*$-algebras}%%
         {Gordon and Breach, Science Publishers Inc., New York-London-Paris}{1967}{#1}}%%
   \ITEE{#3}{ZSemadeni1971}{%%
      \BIb{#2}{Z. Semadeni}%%
         {Banach Spaces of Continuous Functions (Vol. I)}%%
         {\jRN{PWN}}{1971}{#1}}%%
   \ITEE{#3}{JPSerre1951}{%%
      \BIB{#2}{J.-P. Serre}%%
         {Homologie singuli\`{e}re des espaces fibr\'{e}s}%%
         {\jRN{AnnM}}{54}{1951}{425--505}{#1}}%%
   \ITEE{#3}{DSherman2007}{%%
      \BIB{#2}{D. Sherman}%%
         {On the dimension theory of von Neumann algebras}%%
         {\jRN{MScand}}{101}{2007}{123--147}{#1}}%%
   \ITEE{#3}{WSierpinski1928}{%%
      \BIB{#2}{W. Sierpi\'{n}ski}%%
         {Sur les projections des ensembles compl\'{e}mentaires aux ensembles \textup{(A)}}%%
         {\jRN{FM}}{11}{1928}{117--122}{#1}}%%
   \ITEE{#3}{MSlocinski1980}{%%
      \BIB{#2}{M. S\l{}oci\'{n}ski}%%
         {On the Wold-type decomposition of a pair of commuting isometries}%%
         {\jRN{APM}}{37}{1980}{255--262}{#1}}%%
   \ITEE{#3}{RCSteinlage1975}{%%
      \BIB{#2}{R.C. Steinlage}%%
         {On Haar measure in locally compact $T_2$ spaces}%%
         {\jRN{AmJM}}{97}{1975}{291--307}{#1}}%%
   \ITEE{#3}{JStochel,FHSzafraniec1989}{%%
      \BIB{#2}{J. Stochel and F.H. Szafraniec}%%
         {On normal extensions of unbounded operators. III. Spectral properties}%%
         {\jRN{PublRIMSKyoto}}{25}{1989}{105--139}{#1}}%%
   \ITEE{#3}{JStochel,FHSzafraniec1989a}{%%
      \BIB{#2}{J. Stochel and F.H. Szafraniec}%%
         {The normal part of an unbounded operator}%%
         {\jRN{ProcKonink}}{92}{1989}{495--503}{#1}}%%
   \ITEE{#3}{AHStone1962}{%%
      \BIB{#2}{A.H. Stone}%%
         {Absolute $\FFf_{\sigma}$-spaces}%%
         {\jRN{PAMS}}{13}{1962}{495--499}{#1}}%%
   \ITEE{#3}{AHStone1962a}{%%
      \BIB{#2}{A.H. Stone}%%
         {Non-separable Borel sets}%%
         {\jRN{DissM}}{28}{1962}{41 pages}{#1}}%%
   \ITEE{#3}{AHStone1972}{%%
      \BIB{#2}{A.H. Stone}%%
         {Non-separable Borel sets II}%%
         {\jRN{GTopA}}{2}{1972}{249--270}{#1}}%%
   \ITEE{#3}{MHStone1937}{%%
      \BIB{#2}{M.H. Stone}%%
         {Application of the theory of Boolean rings to general topology}%%
         {\jRN{TAMS}}{41}{1937}{375--481}{#1}}%%
   \ITEE{#3}{MHStone1948}{%%
      \BIB{#2}{M.H. Stone}%%
         {The generalized Weierstrass approximation theorem}%%
         {\jRN{MMag}}{21}{1948}{167--184}{#1}}%%
   \ITEE{#3}{BSz-Nagy1947}{%%
      \BIB{#2}{B. Sz.-Nagy}%%
         {On uniformly bounded linear transformations in Hilbert space}%%
         {\jRN{ActaSM}}{11}{1947}{152--157}{#1}}%%
   \ITEE{#3}{WTakahashi1970}{%%
      \BIB{#2}{W. Takahashi}%%
         {A convexity in metric space and nonexpansive mappings, I}%%
         {\jRN{KodaiMSemRep}}{22}{1970}{142--149}{#1}}%%
   \ITEE{#3}{MTakesaki2002}{%%
      \BIb{#2}{M. Takesaki}%%
         {Theory of Operator Algebras I \textup{(Encyclopaedia of Mathematical Sciences, Volume 124)}}%%
         {Springer-Verlag, Berlin-Heidelberg-New York}{2002}{#1}}%%
   \ITEE{#3}{MTakesaki2003}{%%
      \BIb{#2}{M. Takesaki}%%
         {Theory of Operator Algebras II \textup{(Encyclopaedia of Mathematical Sciences, Volume 125)}}%%
         {Springer-Verlag, Berlin-Heidelberg-New York}{2003}{#1}}%%
   \ITEE{#3}{MTakesaki2003a}{%%
      \BIb{#2}{M. Takesaki}%%
         {Theory of Operator Algebras III \textup{(Encyclopaedia of Mathematical Sciences, Volume 127)}}%%
         {Springer-Verlag, Berlin-Heidelberg-New York}{2003}{#1}}%%
   \ITEE{#3}{TYTam1999}{%%
      \BIB{#2}{T.Y. Tam}%%
         {An extension of a result of Lewis}%%
         {\jRN{ELA}}{5}{1999}{1--10}{#1}}%%
   \ITEE{#3}{TYTam2000}{%%
      \BIB{#2}{T.Y. Tam}%%
         {Group majorization, Eaton triples and numerical range}%%
         {\jRN{LMLA}}{47}{2000}{11--28}{#1}}%%
   \ITEE{#3}{TYTam2002}{%%
      \BIB{#2}{T.Y. Tam}%%
         {Generalized Schur-concave functions and Eaton triples}%%
         {\jRN{LMLA}}{50}{2002}{113--120}{#1}}%%
   \ITEE{#3}{TYTam,WCHill2001}{%%
      \BIB{#2}{T.Y. Tam and W.C. Hill}%%
         {On $G$-invariant norms}%%
         {\jRN{LAA}}{331}{2001}{101--112}{#1}}%%
   \ITEE{#3}{AFTiman,IAVestfrid1983}{%%
      \BIB{#2}{A.F. Timan and I.A. Vestfrid}%%
         {Any separable ultrametric space can be isometrically imbedded in $l_2$}%%
         {\jRN{FAA}}{17}{1983}{70--71}{#1}}%%
   \ITEE{#3}{JTomiyama1958}{%%
      \BIB{#2}{J. Tomiyama}%%
         {Generalized dimension function for $\WWw^*$-algebras of infinite type}%%
         {\jRN{TohokuMJ} (2)}{10}{1958}{121--129}{#1}}%%
   \ITEE{#3}{HTorunczyk1970}{%%
      \BIB{#2}{H. Toru\'{n}czyk}%%
         {Remarks on Anderson's paper ``On topological infinite deficiency''}%%
         {\jRN{FM}}{66}{1970}{393--401}{#1}}%%
   \ITEE{#3}{HTorunczyk1970a}{%%
      \BIb{#2}{H. Toru\'{n}czyk}%%
         {$G$-$K$-absorbing and skeletonized sets in metric spaces}%%
         {Ph.D. thesis, Inst. Math. Polish Acad. Sci., Warszawa}{1970}{#1}}%%
   \ITEE{#3}{HTorunczyk1972}{%%
      \BIB{#2}{H. Toru\'{n}czyk}%%
         {A short proof of Hausdorff's theorem on extending metrics}%%
         {\jRN{FM}}{77}{1972}{191--193}{#1}}%%
   \ITEE{#3}{HTorunczyk1974}{%%
      \BIB{#2}{H. Toru\'{n}czyk}%%
         {Absolute retracts as factors of normed linear spaces}%%
         {\jRN{FM}}{86}{1974}{53--67}{#1}}%%
   \ITEE{#3}{HTorunczyk1975}{%%
      \BIB{#2}{H. Toru\'{n}czyk}%%
         {On Cartesian factors and the topological classification of linear metric spaces}%%
         {\jRN{FM}}{88}{1975}{71--86}{#1}}%%
   \ITEE{#3}{HTorunczyk1978}{%%
      \BIB{#2}{H. Toru\'{n}czyk}%%
         {Concerning locally homotopy negligible sets and characterization of $l_2$-manifolds}%%
         {\jRN{FM}}{101}{1978}{93--110}{#1}}%%
   \ITEE{#3}{HTorunczyk1980}{%%
      \BiB{#2}{H. Toru\'{n}czyk}{Characterization of infinite-dimensional manifolds}{in:}%%
         {Proceedings of the International Conference on Geometric Topology (Warsaw, 1978)}%%
         {\jRN{PWN}}{1980}{431--437}{#1}}%%
   \ITEE{#3}{HTorunczyk1981}{%%
      \BIB{#2}{H. Toru\'{n}czyk}%%
         {Characterizing Hilbert space topology}%%
         {\jRN{FM}}{111}{1981}{247--262}{#1}}%%
   \ITEE{#3}{HTorunczyk1985}{%%
      \BIB{#2}{H. Toru\'{n}czyk}%%
         {A correction of two papers concerning Hilbert manifolds}%%
         {\jRN{FM}}{125}{1985}{89--93}{#1}}%%
   \ITEE{#3}{KTsuda1985}{%%
      \BIB{#2}{K. Tsuda}%%
         {A note on closed embeddings of finite dimensional metric spaces}%%
         {\jRN{BLondMS}}{17}{1985}{273--278}{#1}}%%
   \ITEE{#3}{PSUrysohn1925}{%%
      \BIB{#2}{P.S. Urysohn}%%
         {Sur un espace m\'{e}trique universel}%%
         {\jRN{CRASParis}}{180}{1925}{803--806}{#1}}%%
   \ITEE{#3}{PSUrysohn1927}{%%
      \BIB{#2}{P.S. Urysohn}%%
         {Sur un espace m\'{e}trique universel}%%
         {\jRN{BullSM}}{51}{1927}{43--64, 74--96}{#1}}%%
   \ITEE{#3}{VVUspenskij1986}{%%
      \BIB{#2}{V.V. Uspenskij}%%
         {A universal topological group with a countable basis}%%
         {\jRN{FAA}}{20}{1986}{86--87}{#1}}%%
   \ITEE{#3}{VVUspenskij1990}{%%
      \BIB{#2}{V.V. Uspenskij}%%
         {On the group of isometries of the Urysohn universal metric space}%%
         {\jRN{CMUC}}{31}{1990}{181--182}{#1}}%%
   \ITEE{#3}{VVUspenskij2004}{%%
      \BIB{#2}{V.V. Uspenskij}%%
         {The Urysohn universal metric space is homeomorphic to a Hilbert space}%%
         {\jRN{TopA}}{139}{2004}{145--149}{#1}}%%
   \ITEE{#3}{VVUspenskij2008}{%%
      \BIB{#2}{V.V. Uspenskij}%%
         {On subgroups of minimal topological groups}%%
         {\jRN{TopA}}{155}{2008}{1580--1606}{#1}}%%
   \ITEE{#3}{VSVaradarajan1963}{%%
      \BIB{#2}{V.S. Varadarajan}%%
         {Groups of automorphisms of Borel spaces}%%
         {\jRN{TAMS}}{109}{1963}{191--220}{#1}}%%
   \ITEE{#3}{AMVershik1998}{%%
      \BIB{#2}{A.M. Vershik}%%
         {The universal Urysohn space, Gromov's metric triples, and random metrics on the series of natural numbers}%%
         {\jRN{UspekhiMN}}{53}{1998}{57--64}{#1} English translation: \jRN{RussMS}{} \textbf{53} (1998), 921--928. %%
         Correction: \jRN{UspekhiMN}{} \textbf{56} (2001), p. 207. English translation: \jRN{RussMS}{} \textbf{56} %%
         (2001), p. 1015.}%%
   \ITEE{#3}{AMVershik2002}{%%
      \BIb{#2}{A.M. Vershik}%%
         {Random metric spaces and the universal Urysohn space}%%
         {Fundamental Mathematics Today. 10th anniversary of the Independent Moscow University. MCCME Publ.}{2002}{#1}}%%
   \ITEE{#3}{NWeaver1999}{%%
      \BIb{#2}{N. Weaver}%%
         {Lipschitz Algebras}%%
         {World Scientific}{1999}{#1}}%%
   \ITEE{#3}{JWeidmann1980}{%%
      \BIb{#2}{J. Weidmann}%%
         {Linear Operators in Hilbert Spaces}%%
         {(Graduate Texts in Mathematics, vol. 68) Springer-Verlag New York Inc.}{1980}{#1}}%%
   \ITEE{#3}{JEWest1969}{%%
      \BIB{#2}{J.E. West}%%
         {Approximating homotopies by isotopies in Fr\'{e}chet manifolds}%%
         {\jRN{BAMS}}{75}{1969}{1254--1257}{#1}}%%
   \ITEE{#3}{JEWest1969a}{%%
      \BIB{#2}{J.E. West}%%
         {Fixed-point sets of transformation groups on infinite-product spaces}%%
         {\jRN{PAMS}}{21}{1969}{575--582}{#1}}%%
   \ITEE{#3}{JEWest1970}{%%
      \BIB{#2}{J.E. West}%%
         {The ambient homeomorphy of infinite-dimensional Hilbert spaces}%%
         {\jRN{PacJM}}{34}{1970}{257--267}{#1}}%%
   \ITEE{#3}{JHCWhitehead1949}{%%
      \BIB{#2}{J.H.C. Whitehead}%%
         {Combinatorial homotopy I}%%
         {\jRN{BAMS}}{55}{1949}{213--245}{#1}}%%
   \ITEE{#3}{GTWhyburn1942}{%%
      \BIb{#2}{G. T. Whyburn}%%
         {Analytic Topology}%%
         {Amer. Math. Soc. Colloquium Publications (vol. XXVIII), New York}{1942}{#1}}%%
   \ITEE{#3}{WWogen1969}{%%
      \BIB{#2}{W. Wogen}%%
         {On generators for von Neumann algebras}%%
         {\jRN{BAMS}}{75}{1969}{95--99}{#1}}%%
   \ITEE{#3}{RYTWong1967}{%%
      \BIB{#2}{R.Y.T. Wong}%%
         {On homeomorphisms of certain infinite dimensional spaces}%%
         {\jRN{TAMS}}{128}{1967}{148--154}{#1}}%%
   \ITEE{#3}{LYang,JZhang1987}{%%
      \BIB{#2}{L. Yang and J. Zhang}%%
         {Average distance constants of some compact convex space}%%
         {\jRN{JChinUST}}{17}{1987}{17--23}{#1}}%%
   \ITEE{#3}{PZakrzewski1993}{%%
      \BIB{#2}{P. Zakrzewski}%%
         {The existence of invariant $\sigma$-finite measures for a group of transformations}%%
         {\jRN{IsraelJM}}{83}{1993}{275--287}{#1}}%%
   \ITEE{#3}{PZakrzewski2002}{%%
      \BIb{#2}{P. Zakrzewski}%%
         {Measures on Algebraic-Topological Structures, Handbook of Measure Thoery}%%
         {E. Pap, ed., Elsevier, Amsterdam}{2002, 1091--1130}{#1}}%%
   \ITEE{#3}{KZhu2000}{%%
      \BIB{#2}{K. Zhu}%%
         {Operators in Cowen-Douglas classes}%%
         {\jRN{IllinoisJM}}{44}{2000}{767--783}{#1}}%%
   }
\newcommand{\mypaplist}[2][]{%%
   \ITEE{#2}{pn1}{%%
      \myBIB{Separate and joint similarity to families of normal operators}%%
         {\jRN[#1]{SM}}{149}{2002}{39--62}}%%
   \ITEE{#2}{pn2}{%%
      \myBIB{Locally arcwise connected metrizable spaces with the fixed point property are complete-metrizable}%%
         {\jRN[#1]{TopA}}{153}{2006}{1639--1642}}%%
   \ITEE{#2}{pn3}{%%
      \myBIB{Invariant measures for equicontinuous semigroups of continuous transformations of a compact Hausdorff space}%%
         {\jRN[#1]{TopA}}{153}{2006}{3373--3382}}%%
   \ITEE{#2}{pn4}{%%
      \myBIB{Approximation of the Hausdorff distance by the distance of continuous surjections}%%
         {\jRN[#1]{TopA}}{154}{2007}{655--664}}%%
   \ITEE{#2}{pn5}{%%
      \myBIB{Generalized Haar integral}%%
         {\jRN[#1]{TopA}}{155}{2008}{1323--1328}}%%
   \ITEE{#2}{pn6}{%%
      \myBIB{Integration and Lipschitz functions}%%
         {\jRN[#1]{RCMP}}{57}{2008}{391--399}}%%
   \ITEE{#2}{pn7}{%%
      \myBIB{Canonical Banach function spaces generated by Urysohn universal spaces. Measures as Lipschitz maps}%%
         {\jRN[#1]{SM}}{192}{2009}{97--110}}%%
   \ITEE{#2}{pn8}{%%
      \myBIB{Urysohn universal spaces as metric groups of exponent $2$}%%
         {\jRN[#1]{FM}}{204}{2009}{1--6}}%%
   \ITEE{#2}{pn9}{%%
      \myBIB{Central subsets of Urysohn universal spaces}%%
         {\jRN[#1]{CMUC}}{50}{2009}{445--461}}%%
   \ITEE{#2}{pn10}{%%
      \myBIB[P. Niemiec and T.Y. Tam]{A representation of $G$-in\-variant norms for Eaton triple}%%
         {\jRN[#1]{JCA}}{18}{2011}{59--65}}%%
   \ITEE{#2}{pn11}{%% U-F2
      \myBIB{Functor of extension of contractions on Urysohn universal spaces}%%
         {\jRN[#1]{ACS}}{}{2009}{\texttt{DOI: 10.1007/s10485-009-9218-z}}}%%
   \ITEE{#2}{pn12}{%%
      \myBIB{Ultra-$\mM$-separability}%%
         {\jRN[#1]{TopA}}{157}{2010}{669--673}}%%
   \ITEE{#2}{pn13}{%%
      \myBIB{Functor of extension of $\Lambda$-isometric maps between central subsets %%
         of the unbounded Urysohn universal space}{\jRN[#1]{CMUC}}{51}{2010}{541--549}}%%
   \ITEE{#2}{pn14}{%%
      \myBIB{Normed topological pseudovector groups}{\jRN[#1]{ACS}}{}{2010}%%
         {\ITE{\equal{#1}{}}{\texttt{DOI: 10.1007/s10485\-010-9239-7}}{\texttt{DOI: 10.1007/s10485-010-9239-7}}}}%%
   \ITEE{#2}{pn15}{%%
      \myBIB{Topological structure of Urysohn universal spaces}%%
         {\jRN[#1]{TopA}}{158}{2011}{352--359}}%%
   \ITEE{#2}{pn16}{%%
      \myBIB{A note on invariant measures}%%
         {\jRN[#1]{OpusM}}{31}{2011}{425--431}}%%
   \ITEE{#2}{pn17}{%%
      \myBIB{Strengthened Stone-Weierstrass type theorem}%%
         {\jRN[#1]{OpusM}}{31}{2011}{645--650}}%%
   \ITEE{#2}{pnX2}{%% U-F2, U-F, SMF
      \myBAPP{Functor of continuation in Hilbert cube and Hilbert space}%%
         {to appear in \jRN[#1]{FM}}}%%
   \ITEE{#2}{pnX3}{%% SPECTRUM
      \myBAPP{Norm closures of orbits of bounded operators}%%
         {to appear.}}%%
   \ITEE{#2}{pnX6}{%%
      \myBAPP{Extending maps by injective $\sigma$-$Z$-maps in Hilbert manifolds}%%
         {to appear in \jRN[#1]{BullPol}}}%%
   \ITEE{#2}{pnX7}{%%
      \myBAPP{Spaces of measurable functions}%%
         {submitted to \jRN[#1]{CollectM}}}%%
   \ITEE{#2}{pnX8}{%%
      \myBAPP{Normal systems over ANR's, rigid embeddings and nonseparable absorbing sets}%%
         {submitted to \jRN[#1]{ActaMSinES}}}%%
   \ITEE{#2}{pnX9}{%%
      \myBAPP{Borel structure of the spectrum of a closed operator}%%
         {submitted to \jRN[#1]{SM}}}%%
   \ITEE{#2}{pnX10}{%%
      \myBAPP{Central points and measures and dense subsets of compact metric spaces}%%
         {submitted to \jRN[#1]{TopMethNA}}}%%
   \ITEE{#2}{pnX11}{%%
      \myBAPP{Generalized absolute values and polar decompositions of a bounded operator}%%
         {submitted to \jRN[#1]{IEOT}.}}%%
   \ITEE{#2}{pnX12}{%%
      \myBAPP{Ultrametrics, extending of Lipschitz maps and nonexpansive selections}%%
         {accepted for publication in \jRN[#1]{HJM}}}%%
   \ITEE{#2}{pnX13}{%%
      \myBAPP{A note on ANR's}%%
         {submitted to \jRN[#1]{TopA}}}%%
   \ITEE{#2}{pnX14}{%%
      \myBAPP{Problem with almost everywhere equality}%%
         {submitted to \jRN[#1]{ArchM}}}%%
   \ITEE{#2}{pnX15}{%%
      \myBAPP{Universal valued Abelian groups}%%
         {submitted to \jRN[#1]{LNM}}}%%
   \ITEE{#2}{pnX16}{%%
      \myBAPP{Unitary equivalence and decompositions of finite systems of closed densely defined operators %%
         in Hilbert spaces}{submitted to \jRN[#1]{DissM}}}%%
   }
\begin{document}

\title[Problem with almost everywhere equality]{Problem with almost\\everywhere equality}
\myData
\begin{abstract}
A topological space $Y$ is said to have (AEEP) if the following condition is fulfilled. Whenever $(X,\Mm)$ is
a measurable space and $f,g\dd X \to Y$ are two measurable functions, then the set $\Delta(f,g) = \{x \in X\dd\
f(x) = g(x)\}$ is a member of $\Mm$. It is shown that a metrizable space $Y$ has (AEEP) iff the cardinality
of $Y$ is no greater than $2^{\aleph_0}$.\\
\textit{2010 MSC: Primary 28A20; Secondary 28A05.}\\
Key words: measurability; almost everywhere equality; metrizable space; measurable space; measurable function; measurable
set.
\end{abstract}
\maketitle

%%\thisPaper{pn??}

\SECT{Introduction}

In several aspects of mathematics dealing with measurability (such as measure theory, descriptive set theory, stochastic
processes, ergodic theory, study of $L^p$-spaces, etc.) the idea of identifying functions which are equal almost everywhere
is quite natural. The experience gained from real-valued functions may lead to an oversight that the set on which two
measurable functions (taking values in an arbitrary topological space) coincide is always measurable. It is well-known and
quite easy to prove that this happens when functions take values in a space with countable base or, if they have separable
images lying in a metrizable space. However, it is not true in general. The reason for this is that $\Bb(Y) \otimes \Bb(Y)$
differs (in general) from $\Bb(Y \times Y)$ where $\Bb(Y)$ is the $\sigma$-algebra of all Borel subsets of a topological
space $Y$. Let us say that a topological space $Y$ has \textit{almost everywhere equality property} (briefly, (AEEP))
if $Y$ satisfies the condition stated in Abstract. It is easily seen (see \LEM{delta} below) that $Y$ has (AEEP) iff
the diagonal of $Y$ belongs to $\Bb(Y) \otimes \Bb(Y)$. So, it may turn out that $Y$ has (AEEP) but still $\Bb(Y) \otimes
\Bb(Y) \neq \Bb(Y \times Y)$. The aim of this short note is to prove that a metrizable space has (AEEP) iff $\card(Y)
\leqsl 2^{\aleph_0}$. Thus, among metrizable spaces only those whose topological weight is no greater than $2^{\aleph_0}$
have (AEEP). It may seem surprising that not only separable spaces appear in this characterization.\par
Nonseparable metric spaces are widely investigated in functional analysis and operator theory. In fact, the Banach algebra
of all bounded linear operators acting on a separable Banach space is usually nonseparable. Also infinite-dimensional
von Neumann algebras are nonseparable. A special and a very important in theory of geometry of Banach spaces example
of them is the Banach space $l^{\infty}$ of all bounded sequences. So, our result may find applications in investigations
of these spaces.\par
\textbf{Notation and terminology.} For every topological space $Y$, $\Bb(Y)$ stands for the $\sigma$-algebra of all Borel
subsets of $Y$. That is, $\Bb(Y)$ is the smallest $\sigma$-algebra containing all open sets. Whenever $(\Omega,\Mm)$ and
$(\Lambda,\Nn)$ are two measurable spaces, a function $f\dd (\Omega,\Mm) \to (\Lambda,\Nn)$ is said to be
\textit{measurable} iff $f^{-1}(B) \in \Mm$ for each $B \in \Nn$. $\Mm \otimes \Nn$ denotes the product $\sigma$-algebra
of $\Mm$ and $\Nn$, i.e. $\Mm \otimes \Nn$ is the smallest $\sigma$-algebra on $\Omega \times \Lambda$ which contains all
sets of the form $A \times B$ with $A \in \Mm$ and $B \in \Nn$. If $g\dd (\Omega,\Mm) \to Y$ where $Y$ is a topological
space, $g$ is measurable if $g^{-1}(U) \in \Mm$ for any open set $U \subset Y$ or, equivalently, if $g\dd (\Omega,\Mm) \to
(Y,\Bb(Y))$ is measurable. For two functions $u,v \dd D \to E$, $\Delta(f,g)$ stands for the set $\{x \in D\dd\ u(x) =
v(x)\}$. Additionally, for every set $E$, $\Delta_E$ denotes the diagonal of $E$, i.e. $\Delta_E = \{(x,x)\dd\ x \in E\}$;
and $\card(E)$ is the cardinality of $E$.

\SECT{The result}

We begin with a simple

\begin{lem}{delta}
For a topological space $Y$ \tfcae
\begin{enumerate}[\upshape(i)]
\item $Y$ has \textup{(AEEP)},
\item $\Delta_Y \in \Bb(Y) \otimes \Bb(Y)$.
\end{enumerate}
\end{lem}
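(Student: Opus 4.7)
The proof should be a short, essentially formal argument once one has the right equivalence between ``measurable into $Y$'' and ``measurable into $(Y,\Bb(Y))$'' (already recorded in the notation section) and the standard behavior of the product $\sigma$-algebra under coordinate projections. I do not expect any genuine obstacle; the whole content is that the pair $(f,g)$ of measurable maps is jointly measurable into $\Bb(Y)\otimes\Bb(Y)$, and conversely that the two coordinate projections on $Y\times Y$ are themselves measurable.

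For the implication (i)$\Rightarrow$(ii) my plan is to specialize (AEEP) to a canonical example. Take $(X,\Mm) = (Y\times Y,\,\Bb(Y)\otimes\Bb(Y))$ and let $f = \pi_1$, $g = \pi_2$ be the two coordinate projections. For every open $U\subset Y$ one has $\pi_1^{-1}(U) = U\times Y$ and $\pi_2^{-1}(U) = Y\times U$, and both of these sets are generators of $\Bb(Y)\otimes\Bb(Y)$, so $\pi_1$ and $\pi_2$ are measurable. Then $\Delta(\pi_1,\pi_2) = \Delta_Y$, and (AEEP) gives $\Delta_Y\in\Bb(Y)\otimes\Bb(Y)$.

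For the implication (ii)$\Rightarrow$(i) I would form the combined map. Let $(X,\Mm)$ be an arbitrary measurable space and let $f,g\dd X\to Y$ be measurable. Define $h\dd X\to Y\times Y$ by $h(x) = (f(x),g(x))$. For every rectangle $A\times B$ with $A,B\in\Bb(Y)$, $h^{-1}(A\times B) = f^{-1}(A)\cap g^{-1}(B)\in\Mm$; since the rectangles generate $\Bb(Y)\otimes\Bb(Y)$, a standard $\sigma$-algebra argument (the collection of subsets of $Y\times Y$ whose $h$-preimage lies in $\Mm$ is a $\sigma$-algebra) shows that $h$ is measurable as a map $(X,\Mm)\to(Y\times Y,\Bb(Y)\otimes\Bb(Y))$. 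Consequently
\[
\Delta(f,g) \;=\; \{x\in X\dd f(x)=g(x)\} \;=\; h^{-1}(\Delta_Y) \;\in\; \Mm,
\]
which is precisely (AEEP). This finishes the proof; the only ``trick'' is to recognize that the diagonal itself is a witness in one direction and a test set in the other, and there should be no obstacle beyond correctly invoking the definition of the product $\sigma$-algebra.
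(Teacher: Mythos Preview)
Your proof is correct and follows exactly the same approach as the paper: for (i)$\Rightarrow$(ii) you apply (AEEP) to the coordinate projections on $(Y\times Y,\Bb(Y)\otimes\Bb(Y))$, and for (ii)$\Rightarrow$(i) you pull back $\Delta_Y$ along the paired map $h=(f,g)$. The only difference is that you spell out the measurability of $\pi_1,\pi_2$ and of $h$ in slightly more detail than the paper does.
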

\begin{proof}
If $f,g\dd (\Omega,\Mm) \to Y$ are two measurable functions, then the function $h\dd (\Omega,\Mm) \ni \omega \mapsto
(f(\omega),g(\omega)) \in (Y \times Y,\Bb(Y) \otimes \Bb(Y))$ is measurable as well and thus $\Delta(f,g) =
h^{-1}(\Delta_Y)$ is a member of $\Mm$. This shows that (i) follows from (ii). To see the converse, notice that
the natural projections $p_j\dd (Y \times Y,\Bb(Y) \otimes \Bb(Y)) \ni (y_1,y_2) \mapsto y_j \in Y$ ($j=1,2$) are
measurable and that $\Delta(p_1,p_2) = \Delta_Y$ which finishes the proof.
\end{proof}

\begin{lem}{continuum}
For an arbitrary topological space $Y$, every member $F$ of $\Bb(Y) \otimes \Bb(Y)$ may be written in the form
\begin{equation}\label{eqn:aux1}
F = \bigcup_{t \in [0,1]} (A_t \times B_t)
\end{equation}
where $A_t, B_t \in \Bb(Y)$ ($t \in [0,1]$).
\end{lem}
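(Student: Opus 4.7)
The plan is to establish the lemma via a standard countably-generated reduction, followed by a cardinality-counting of sections. The collection $\Ff$ of sets representable as $\bigcup_{t\in[0,1]}(A_t\times B_t)$ with $A_t,B_t\in\Bb(Y)$ is easily seen to be stable under unions of cardinality $\leqsl 2^{\aleph_0}$ (since $|\NNN\times[0,1]|=|[0,1]|$), and it contains all measurable rectangles; the trouble is that $\Ff$ is not obviously closed under complements, because complementing a union of rectangles may produce too many distinct sections. I would therefore not try to prove $\Ff$ is a $\sigma$-algebra directly.

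Instead, I would proceed in two stages. First, I would show that the family
\[
\Ss=\bigl\{F\subset Y\times Y\dd\ \exists\,\{E_n\}_{n\in\NNN}\subset\Bb(Y)\text{ with }F\in\sigma(E_i\times E_j\dd i,j\in\NNN)\bigr\}
\]
is a $\sigma$-algebra containing every rectangle $A\times B$ with $A,B\in\Bb(Y)$; indeed the union of two countably-generated product $\sigma$-algebras is again countably-generated. Hence $\Ss\supset\Bb(Y)\otimes\Bb(Y)$. So given $F$, pick such a countable family $\{E_n\}$, and set $\Aa_0=\sigma(E_n\dd n\in\NNN)\subset\Bb(Y)$; then $F\in\Aa_0\otimes\Aa_0$.

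Second, I would introduce the equivalence relation on $Y$ given by $x\sim x'\iff(\forall n)\,(x\in E_n\Leftrightarrow x'\in E_n)$. Each equivalence class $[x]=\bigcap_{x\in E_n}E_n\cap\bigcap_{x\notin E_n}(Y\setminus E_n)$ is a countable intersection, hence lies in $\Aa_0\subset\Bb(Y)$; and since each class is coded by an element of $\{0,1\}^{\NNN}$, there are at most $2^{\aleph_0}$ classes. A standard monotone-class argument shows that the family
\[
\Dd=\{G\subset Y\times Y\dd\ (\forall x\sim x')\,G^x=G^{x'}\}
\]
(where $G^x=\{y\dd(x,y)\in G\}$) is a $\sigma$-algebra containing all rectangles $A\times B$ with $A,B\in\Aa_0$, because such $A$ has its indicator constant on $\sim$-classes. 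Hence $\Dd\supset\Aa_0\otimes\Aa_0\ni F$, so the section $F^x$ depends only on the class of $x$.

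The conclusion is then bookkeeping: enumerate the distinct classes as $\{[x_\tau]\}_{\tau\in T}$ with $|T|\leqsl2^{\aleph_0}$, put $A_\tau=[x_\tau]\in\Bb(Y)$ and $B_\tau=F^{x_\tau}\in\Bb(Y)$ (sections of measurable sets in a product are measurable in the factor), so that $F=\bigcup_{\tau\in T}A_\tau\times B_\tau$; since $|T|\leqsl|[0,1]|$ one reindexes over $[0,1]$ by repeating or padding with $\emptyset\times\emptyset$. The main obstacle is establishing the section invariance under $\sim$ (the $\Dd$-argument), as this is where the countable-generation of $\Aa_0$ is essentially used; everything else is cardinal arithmetic and routine $\sigma$-algebra manipulation.
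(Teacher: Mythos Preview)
Your argument is correct, but it follows a genuinely different route from the paper's. The paper does not pass through countable generation or sections at all; it applies the Monotone Class Theorem directly. Letting $\AaA$ be the algebra of finite unions of Borel rectangles, the paper observes that the target family $\FfF$ (sets expressible as in \eqref{eqn:aux1}) contains $\AaA$, is trivially closed under countable unions (since $|\NNN\times[0,1]|=|[0,1]|$), and is closed under countable \emph{intersections} via the distributive identity
\[
\bigcap_{j=1}^{\infty}\Bigl(\bigcup_{t\in[0,1]} A^{(j)}_t\times B^{(j)}_t\Bigr)=\bigcup_{\xi\in[0,1]^{\NNN}}\Bigl(\bigcap_{j=1}^{\infty}A^{(j)}_{\xi(j)}\Bigr)\times\Bigl(\bigcap_{j=1}^{\infty}B^{(j)}_{\xi(j)}\Bigr),
\]
together with $|[0,1]^{\NNN}|=|[0,1]|$. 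This sidesteps exactly the obstacle you identified (closure under complements) by replacing the $\sigma$-algebra axioms with the monotone-class axioms.

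What each approach buys: the paper's proof is shorter and needs no auxiliary structure beyond one cardinal-arithmetic identity. Your approach, while more elaborate, yields a structurally stronger conclusion you did not highlight: the sets $A_\tau$ in your decomposition are the $\sim$-equivalence classes, hence pairwise disjoint and partitioning $Y$, and each $B_\tau$ is exactly the $\tau$-section of $F$. The paper's rectangles, by contrast, may overlap freely.
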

\begin{proof}
Let $\AaA$ be the family of all subsets of $Y \times Y$ which are finite unions of sets of the form $A \times B$ with
$A, B \in \Bb(Y)$. It is easily seen that $\AaA$ is an algebra of subsets of $Y \times Y$. Hence, by the Monotone Class
Theorem (cf. e.g. Theorem~1.3 of \cite{l-l}), $\Bb(Y) \otimes \Bb(Y)$ is the smallest family $\FfF$ such that $\AaA \subset
\FfF$ and
\begin{equation}\label{eqn:aux2}
F_1,F_2,\ldots \in \FfF \implies \bigcap_{n=1}^{\infty} F_n, \bigcup_{n=1}^{\infty} F_n \in \FfF.
\end{equation}
Now let $\FfF$ consists of all sets $F$ which may be written in the form \eqref{eqn:aux1} (with $A_t, B_t \in \Bb(Y)$).
Since $\varempty \in \FfF$, $\AaA \subset \FfF$. So, it remains to check that $\FfF$ satisfies \eqref{eqn:aux2}. Let
$F_j = \bigcup_{t \in [0,1]} A^{(j)}_t \times B^{(j)}_t$ for $j=1,2,\ldots$ It is clear that $\bigcup_{j=1}^{\infty} F_j
\in \FfF$. To this end, put $\Lambda = [0,1]^{\NNN}$ (here $0 \notin \NNN$) and observe that
$$
\bigcap_{j=1}^{\infty} F_j = \bigcup_{\xi \in \Lambda} \bigl(\bigcap_{j=1}^{\infty} A^{(j)}_{\xi(j)}\bigr) \times
\bigl(\bigcap_{j=1}^{\infty} B^{(j)}_{\xi(j)}\bigr)
$$
which finishes the proof since there is a bijection between $\Lambda$ and $[0,1]$.
\end{proof}

As an immediate consequence of the above results, we obtain

\begin{cor}{nomore}
If a topological space $Y$ has \textup{(AEEP)}, then $$\card(Y) \leqsl 2^{\aleph_0}.$$
\end{cor}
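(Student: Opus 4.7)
The plan is to combine the two preceding lemmas directly, using a choice function to map $Y$ injectively into $[0,1]$.

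First, I would invoke \LEM{delta} to translate the hypothesis that $Y$ has (AEEP) into the assertion that $\Delta_Y \in \Bb(Y) \otimes \Bb(Y)$. Then I would apply \LEM{continuum} to obtain a representation
\[
\Delta_Y = \bigcup_{t \in [0,1]} A_t \times B_t
\]
with $A_t, B_t \in \Bb(Y)$ for every $t \in [0,1]$. This reduces the corollary to a purely combinatorial statement about a family of ``rectangles'' covering the diagonal.

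Next, for each $y \in Y$ the pair $(y,y)$ lies in $\Delta_Y$, so by the axiom of choice there is an assignment $y \mapsto t(y) \in [0,1]$ with $y \in A_{t(y)} \cap B_{t(y)}$. The key observation is that this map is injective: if $y, y' \in Y$ satisfy $t(y) = t(y') = s$, then $y \in A_s$ and $y' \in B_s$, hence $(y,y') \in A_s \times B_s \subset \Delta_Y$, forcing $y = y'$. Therefore $\card(Y) \leqsl \card([0,1]) = 2^{\aleph_0}$.

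There is essentially no obstacle here beyond recognising that the ``off-diagonal'' constraint built into $\Delta_Y$ automatically rules out two distinct points sharing the same index $t$; the entire argument is a two-line consequence of the preceding lemmas once this injectivity trick is spotted.
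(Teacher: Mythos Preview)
Your argument is correct and is exactly the intended route: the paper presents the corollary as ``an immediate consequence'' of \LEM{delta} and \LEM{continuum} without spelling out the details, and the injectivity trick you describe is precisely how one makes that immediacy explicit.
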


Now we want to prove the converse of \COR{nomore} for metrizable $Y$. For need of this, we recall some classical notion
in topology. The \textit{topological cone} $C(Y)$ over a topological space $Y$ is the set $(Y \times (0,1]) \cup
\{\omega_Y\}$ equipped with the topology such that $Y \times (0,1]$ is open in $C(Y)$, the topology on $Y \times (0,1]$
inherited from the one of $C(Y)$ coincides with the product topology, and the sets $(Y \times (0,t)) \cup \{\omega_Y\}$
with $t \in (0,1)$ form a basis of open neighbourhoods of $\omega_Y$ in $C(Y)$. The next result is elementary and we omit
its proof.

\begin{lem}{ext}
If $u\dd X \to Y$ is a continuous function between topological spaces $X$ and $Y$, then the function $\widehat{u}\dd C(X)
\to C(Y)$ given by $\widehat{u}((x,t)) = (u(x),t)$ and $\widehat{u}(\omega_X) = \omega_Y$ is continuous as well.
\end{lem}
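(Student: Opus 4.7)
The plan is to verify continuity of $\widehat{u}$ at each point of $C(X)$ separately, splitting into two cases: points of the open subset $X \times (0,1]$ and the apex $\omega_X$.

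For a point $(x_0,t_0) \in X \times (0,1]$, I would use that, by definition of the cone topology, $X \times (0,1]$ is open in $C(X)$ and the subspace topology it inherits coincides with the product topology (and likewise for $Y \times (0,1] \subset C(Y)$). The restriction $\widehat{u}|_{X \times (0,1]}$ is given by $(x,t) \mapsto (u(x),t)$, which factors as the product of the continuous map $u\dd X \to Y$ and $\id_{(0,1]}$, hence is continuous as a map into $Y \times (0,1]$. Composing with the (continuous) open inclusion $Y \times (0,1] \inj C(Y)$ yields continuity of $\widehat{u}$ at $(x_0,t_0)$.

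For continuity at $\omega_X$, I would check the preimage condition on a basis of neighbourhoods of $\widehat{u}(\omega_X) = \omega_Y$. Every such basic neighbourhood has the form $V_t := (Y \times (0,t)) \cup \{\omega_Y\}$ with $t \in (0,1)$, and a direct computation from the defining formulas $\widehat{u}((x,s)) = (u(s),s)$ and $\widehat{u}(\omega_X) = \omega_Y$ gives
\[
\widehat{u}^{-1}(V_t) = (X \times (0,t)) \cup \{\omega_X\},
\]
which is itself a basic open neighbourhood of $\omega_X$ in $C(X)$. Since continuity is a local property, combining the two cases yields continuity of $\widehat{u}$ on all of $C(X)$.

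There is no real obstacle here; the statement is genuinely elementary and the only thing requiring attention is unravelling the definition of the cone topology (in particular, that $\{(X \times (0,t)) \cup \{\omega_X\}\dd t \in (0,1)\}$ is a neighbourhood basis at $\omega_X$), which immediately matches the preimage calculation above.
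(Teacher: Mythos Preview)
Your argument is correct and is exactly the elementary verification one would expect; the paper in fact omits the proof entirely, stating only that the result is elementary, so there is nothing further to compare. One trivial typo: in your apex case you wrote $\widehat{u}((x,s)) = (u(s),s)$ where you meant $(u(x),s)$.
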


An important example of a topological cone is the so-called \textit{hedgehog} space (\cite[Example~4.1.5]{eng}).
The hedgehog $J(\mM)$ of spininess $\mM \geqsl \aleph_0$ is the topological cone over a discrete space of cardinality
$\mM$. Its importance is justified by the following result of Kowalsky \cite{kow} (see also \cite[Theorem~4.4.9]{eng};
it is already known that $[J(\mM)]^{\aleph_0}$ with infinite $\mM$ is homeomorphic to the Hilbert space of Hilbert space
dimension $\mM$, see \cite{to1,to2} or Remark in Exercise~4.4.K of \cite{eng}).

\begin{thm}{hedge}
Every metrizable space of topological weight no greater than $\mM$ (where $\mM \geqsl \aleph_0$) is homeomorphic
to a subset of $[J(\mM)]^{\aleph_0}$.
\end{thm}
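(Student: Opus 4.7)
The plan is to appeal to Bing's metrization theorem and build the embedding as a diagonal product of natural maps into single copies of $J(\mM)$, one for each level of a $\sigma$-discrete base. First, since $X$ is metrizable of topological weight no greater than $\mM$, fix a $\sigma$-discrete base $\BbB = \bigcup_{n=1}^{\infty} \BbB_n$ of $X$ with $\card(\BbB_n) \leqsl \mM$ for every $n$, together with a compatible metric $d$ on $X$. Enumerate $\BbB_n = \{U_{\alpha}^{(n)}\dd\ \alpha \in A_n\}$; realizing $J(\mM)$ as the quotient of $A_n \times [0,1]$ that collapses $A_n \times \{0\}$ to the apex $\omega$, define $f_n\dd X \to J(\mM)$ by $f_n(x) = (\alpha,d(x,X \setminus U_{\alpha}^{(n)}))$ whenever $x \in U_{\alpha}^{(n)}$ (the index $\alpha$ being unique by disjointness of $\BbB_n$), and $f_n(x) = \omega$ otherwise.

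Next I would verify that each $f_n$ is continuous. At an arbitrary point $x \in X$, discreteness of $\BbB_n$ supplies a neighbourhood $V$ of $x$ meeting at most one member $U_{\beta}^{(n)}$; on $V$ the definition of $f_n$ collapses to the single-spine formula $y \mapsto (\beta,d(y,X \setminus U_{\beta}^{(n)}))$, where $(\beta,0)$ is identified with $\omega$. This is the composition of the continuous function $y \mapsto d(y,X \setminus U_{\beta}^{(n)})$ with the isometric inclusion of $[0,1]$ as the $\beta$-spine of $J(\mM)$, and is therefore continuous; if no such $U_{\beta}^{(n)}$ exists then $f_n \equiv \omega$ on $V$.

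Finally, set $F\dd X \to [J(\mM)]^{\aleph_0}$ by $F(x) = (f_n(x))_{n=1}^{\infty}$. Continuity of $F$ is automatic. To see that $F$ is a homeomorphism onto its image it suffices to check that the family $\{f_n\}_n$ separates points from closed sets. Given a closed set $C \subset X$ and $x \notin C$, pick $U_{\alpha}^{(n)} \in \BbB$ with $x \in U_{\alpha}^{(n)} \subset X \setminus C$ and put $r := d(x,X \setminus U_{\alpha}^{(n)}) > 0$. For every $y \in C$ one has $y \notin U_{\alpha}^{(n)}$, so $f_n(y)$ is either the apex or lies on a spine distinct from $\alpha$; in either case the hedgehog distance from $f_n(x) = (\alpha,r)$ to $f_n(y)$ is at least $r$. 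Hence $f_n(x) \notin \overline{f_n(C)}$, and applying this with $C = \{y\}$ for $y \neq x$ yields injectivity as a by-product.

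The delicate step that I expect to be the main obstacle is exactly the continuity of $f_n$ at points $x$ lying on the ``boundary'' between the apex region and a spine region, i.e.\ points of $\overline{U_{\alpha}^{(n)}} \setminus U_{\alpha}^{(n)}$. It is precisely the requirement that $\BbB_n$ be \emph{discrete} (as opposed to merely disjoint or locally finite) that forces the local description of $f_n$ to involve only one spine at a time, which is what makes the single-variable distance formula glue continuously across the apex.
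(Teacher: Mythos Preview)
The paper does not prove this theorem at all: it is simply quoted as Kowalsky's result, with references to \cite{kow} and to Theorem~4.4.9 in Engelking's \emph{General Topology}, and then used as a black box. Your argument is essentially the standard proof one finds in Engelking, so there is nothing to compare against on the paper's side.

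Your proof is correct in outline; one small technical slip is worth fixing. You write $f_n(x) = (\alpha, d(x, X \setminus U_\alpha^{(n)}))$, but the second coordinate must lie in $[0,1]$ for this to define a point of the hedgehog as you have set it up. Replace $d$ by $\min(d,1)$ at the outset (or normalize the distance in some other way); nothing else in the argument is affected. With that adjustment the continuity check, the separation-of-points-from-closed-sets verification, and the appeal to the diagonal embedding lemma all go through exactly as you wrote them.
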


As a colorrary of \THM{hedge}, we obtain the following result, which may be interesting in itself.

\begin{pro}{1-1}
Every metrizable space $Y$ such that $\card(Y) \leqsl 2^{\aleph_0}$ admits a continuous one-to-one function of $Y$ into
a separable metrizable space.
\end{pro}
\begin{proof}
Let $D = [0,1]$ and $T = [0,1]$ be equipped with, respectively, the discrete and the natural topology. By \LEM{ext},
the function $C(D) \ni z \mapsto z \in C(T)$ is continuous. This means that there is a one-to-one continuous function
$u\dd J(2^{\aleph_0}) \to W$ where $W$ is a separable metrizable space. But then the function $[J(2^{\aleph_0})]^{\aleph_0}
\ni (x_n)_{n=1}^{\infty} \mapsto (u(x_n))_{n=1}^{\infty} \in W^{\aleph_0}$ is continuous and one-to-one as well. Now it
suffices to apply \THM{hedge}.
\end{proof}

We are now able to prove the main result of the paper.

\begin{thm}{aeep}
For a metrizable space $Y$ \tfcae
\begin{enumerate}[\upshape(i)]
\item $Y$ has \textup{(AEEP)},
\item the topological weight of $Y$ is no greater than $2^{\aleph_0}$,
\item $\card(Y) \leqsl 2^{\aleph_0}$.
\end{enumerate}
\end{thm}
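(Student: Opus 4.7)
The plan is to complete the cycle $\text{(i)}\Rightarrow\text{(iii)}\Leftrightarrow\text{(ii)}\Rightarrow\text{(i)}$. The first implication is delivered directly by \COR{nomore}. The equivalence $\text{(ii)}\Leftrightarrow\text{(iii)}$ is standard for metrizable spaces: one has $w(Y)=d(Y)$ in the infinite case, so $w(Y)\leqsl\card(Y)$; and conversely, each point of $Y$ is the limit of a sequence from a fixed dense set, whence $\card(Y)\leqsl d(Y)^{\aleph_0}=w(Y)^{\aleph_0}$. Using $(2^{\aleph_0})^{\aleph_0}=2^{\aleph_0}$, either of the bounds $\leqsl 2^{\aleph_0}$ implies the other; the finite case is trivial.

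The content of the theorem lies in $\text{(iii)}\Rightarrow\text{(i)}$, and here I would lean entirely on \PRO{1-1}. It furnishes a continuous injection $f\dd Y\to W$ into some separable metrizable space $W$. Because $W$ has a countable base, $\Bb(W\times W)=\Bb(W)\otimes\Bb(W)$, and since $W$ is Hausdorff, $\Delta_W$ is closed in $W\times W$; consequently $\Delta_W\in\Bb(W)\otimes\Bb(W)$. The product map $F\dd Y\times Y\to W\times W$ defined by $F(y_1,y_2)=(f(y_1),f(y_2))$ is measurable with respect to the two product $\sigma$-algebras: for any $A,B\in\Bb(W)$ the preimage $F^{-1}(A\times B)=f^{-1}(A)\times f^{-1}(B)$ lies in $\Bb(Y)\otimes\Bb(Y)$ by continuity of $f$, and the rectangles $A\times B$ generate $\Bb(W)\otimes\Bb(W)$. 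Injectivity of $f$ gives $F^{-1}(\Delta_W)=\Delta_Y$, whence $\Delta_Y\in\Bb(Y)\otimes\Bb(Y)$, and \LEM{delta} concludes that $Y$ has \textup{(AEEP)}.

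No step presents a genuine obstacle, since the heavy cardinal-arithmetic and topological work has already been carried out in \LEM{continuum}, \THM{hedge} and \PRO{1-1}. The only delicate point worth emphasizing is that $F$ must be shown measurable into $\Bb(W)\otimes\Bb(W)$ rather than merely into the (in general larger) $\Bb(W\times W)$; this is precisely why the detour through a separable $W$ is needed, for only there do the two $\sigma$-algebras coincide and one can legitimately pull $\Delta_W$ back to $\Delta_Y$.
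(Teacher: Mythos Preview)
Your proof is correct and follows essentially the same route as the paper: (i)$\Rightarrow$(iii) via \COR{nomore}, (ii)$\Leftrightarrow$(iii) via the identity $(2^{\aleph_0})^{\aleph_0}=2^{\aleph_0}$, and (iii)$\Rightarrow$(i) by pulling back $\Delta_W$ along the product of the continuous injection supplied by \PRO{1-1}, then invoking \LEM{delta}. Your write-up is in fact slightly more explicit than the paper's in justifying the measurability of the product map and the membership $\Delta_W\in\Bb(W)\otimes\Bb(W)$.
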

\begin{proof}
The equivalence of (ii) and (iii) follows since $(2^{\aleph_0})^{\aleph_0} = 2^{\aleph_0}$. So, thanks to \LEM{delta} and
\COR{nomore}, we only have to show that $\Delta_Y \in \Bb(Y) \otimes \Bb(Y)$ provided $\card(Y) \leqsl 2^{\aleph_0}$.\par
Assume (iii) is fulfilled. We infer from \PRO{1-1} that there is a separable metrizable space $X$ and a continuous
one-to-one function $u\dd Y \to X$. Then $v = u \times u\dd (Y \times Y,\Bb(Y) \otimes \Bb(Y)) \to (X \times X,\Bb(X)
\otimes \Bb(X))$ is measurable ($v(x,y) = (u(x),u(y))$ for $x,y \in Y$). Since $X$ is separable, $\Bb(X) \otimes \Bb(X) =
\Bb(X \times X)$ and therefore $\Delta_Y = v^{-1}(\Delta_X) \in \Bb(Y) \otimes \Bb(Y)$ and we are done.
\end{proof}

\end{document}